\newtheorem{thm}{Theorem}[section]
\newtheorem{prp}[thm]{Proposition}
\newtheorem{cor}[thm]{Corollary}
\newtheorem{lem}[thm]{Lemma}
\theoremstyle{definition}
\def\P{\mathcal{P}}
\def\R{\mathbb{R}}
\def\N{\mathbb{N}}
\def\F{\mathbb{F}}
\def\D{\mathbb{D}}
\def\1{\mathbbm{1}}
\DeclareMathOperator{\bmo}{BMO}
\DeclareMathOperator{\e}{e}
\begin{document}
\pagestyle{scrheadings}
\onehalfspacing

\title{BMO and  exponential Orlicz space estimates of the discrepancy function in arbitrary dimension}
\author{Dmitriy Bilyk$^a$, Lev Markhasin$^b$\\\\
$^a$ \scriptsize School of Mathematics, University of Minnesota, 206 Church St. SE, Minneapolis, MN, 55408, USA \\ \scriptsize email:  dbilyk@math.umn.edu\\
$^b$ \scriptsize Institut f\"ur Stochastik und Anwendungen, Universit\"at Stuttgart, Pfaffenwaldring 57, 70569 Stuttgart, Germany \\ \scriptsize email: lev.markhasin@mathematik.uni-stuttgart.de}
\maketitle

\begin{abstract}
In the current paper we 
obtain   discrepancy estimates  in  exponential Orlicz  and $\bmo$ spaces  in arbitrary dimension $d \ge 3$. In particular, we use  dyadic harmonic analysis to  prove that  the dyadic product  $\bmo$ and $\exp \big( L^{2/(d-1)} \big)$ norms of the discrepancy function of    so-called digital nets of order two are bounded above by $(\log N)^{\frac{d-1}{2}}$.  The latter bound has been recently conjectured in several papers and is consistent with the best known low-discrepancy constructions. Such estimates play an important role as an intermediate step between the well-understood $L_p$ bounds and the notorious open problem of finding the precise $L_\infty$ asymptotics of the discrepancy function in higher dimensions, which is still elusive.

\end{abstract}

\noindent{\footnotesize {\it 2010 Mathematics Subject Classification.} Primary 11K06,11K38,42C10,46E35,65C05. \\
{\it Key words and phrases.} $\bmo$, discrepancy, order $2$ digital nets, exponetial Orlicz spaces, dominating mixed smoothness, quasi-Monte Carlo, Haar system.} \\[5mm]

\section{Introduction and results}
\subsection{Definitions}

The main object of the present paper is the {\it{discrepancy function}}. For a positive integer $N$ let $\P_N$ be a point set in the unit interval $[0,1)^d$ with $N$ points. The discrepancy function is defined as
\begin{align*}
D_{\P_N}(x) = \sum_{z \in \P_N} \chi_{[0,x)}(z) - N x_1 \cdots x_d
\end{align*}
where $x = (x_1, \ldots, x_d) \in [0,1)^d$ and $[0,x) = [0,x_1)\times\ldots\times[0,x_d)$. By $\chi_A$ we denote the characteristic function of a set $A\in\R^d$, so the term $C_{\P_N}(x) = \sum_z \chi_{[0,x)}(z)$ is equal to the number of points of $\P_N$ in the interval $[0,x)$. Hence, $D_{\P_N}$ measures the deviation of the number of points of $\P_N$ in $[0,x)$ from the fair number of points $L_N(x) = N |[0,x)| = N \, x_1 \cdots x_d$, which would be achieved by a (practically impossible) perfectly uniform distribution of points, thus quantifying the extent of equidistribution of the point set $\P_N$ and its quality for numerical integration (quasi-Monte Carlo methods, see e.g. \cite{DP10}). 

Asymptotic behavior of the discrepancy function in $L_p([0,1)^d)$-spaces for $1<p<\infty$ is well understood. 
The classical lower bound  proved by Roth \cite{R54} for $p=2$ and by Schmidt \cite{S77} for arbitrary $1<p<\infty$ states that there exists a constant $c = c(p,d) > 0$ such that for every positive integer $N$ and all point sets $\P_N$ in $[0,1)^d$ with $N$ points, we have
\begin{align} \label{UpperRoth}
\left\|D_{\P_N}|L_p([0,1)^d)\right\|\geq c\,\left(\log N\right)^{(d-1)/2}.
\end{align}
The best known value for $c$ in $L_2$ can be found in \cite{HM11}. Furthermore, these estimates are known to be sharp, i.e. there exists a constant $C = C(p,d) > 0$ such that for every positive integer $N$, there is a point set $\P_N$ in $[0,1)^d$ with $N$ points such that
\begin{align}\label{UpperLp}
\left\|D_{\P_N}|L_p([0,1)^d)\right\|\leq C\,\left(\log N\right)^{(d-1)/2}.
\end{align}
This was proved by Davenport \cite{D56} for $p=2,d=2$, by Roth \cite{R80} for $p=2$ and arbitrary $d$, and finally by Chen \cite{C80} in the general case. The best known value for $C$ in $L_2$ can be found in \cite{DP10} and \cite{FPPS10}.

The precise asymptotics  of  the $L_{\infty}([0,1)^d)$-norm of the discrepancy function   (star-discrepancy) is  known as {\emph{the great open problem in discrepancy theory} \cite{BC87}. 
The best currently known lower bound in dimensions $d\ge 3$ was obtained quite recently \cite{BLV08}. There exists a constant $c  = c(d) > 0$ such that for every positive integer $N$ and all point sets $\P_N$ in $[0,1)^d$ with $N$ points, we have
\begin{align}
\left\|D_{\P_N}|L_\infty([0,1)^d)\right\|\geq c\,\left(\log N\right)^{(d-1)/2+\eta_d}
\end{align}
where $0 < \eta_d < 1/2$. At the same time,   the bound in the plane is well  known (\cite{S72})
\begin{align}\label{schmidt}
\left\|D_{\P_N}|L_\infty([0,1)^2)\right\|\geq c\,\log N.
\end{align}
Furthermore (e.g., \cite{Hl60}), there exists a constant $C > 0$ such that for every positive integer $N$, there is a point set $\P_N$ in $[0,1)^d$ with $N$ points such that
\begin{align}\label{Linfty}
\left\|D_{\P_N}|L_\infty([0,1)^d)\right\|\leq C\,\left(\log N\right)^{d-1}.
\end{align}
One can observe a gap between the known upper and lower bounds for the star discrepancy in dimensions $d\ge 3$.  There is no agreement among the experts as to what should be the correct asymptotics in higher dimension, the two main conjectures being $(\log N)^{d-1}$ and $(\log N)^{d/2}$. We refer the reader e.g. to \cite{B11} for a more detailed discussion.

\subsection{Main results}

Since the precise behavior of discrepancy in   $L_p$-spaces ($1<p<\infty$) is known, while the $L_\infty$ estimates remain elusive, it is natural and  instructive to investigate what happens in intermediate spaces ``close'' to $L_\infty$. Standard examples of such spaces are the exponential Orlicz spaces and (various versions of)  BMO, which stands for {\emph{bounded mean oscillation}}. In harmonic analysis, these spaces often play a role of a natural substitute for $L_\infty$ as an endpoint of the $L_p$ scale. We refer the reader to the next section for precise definitions and references. 

This approach was initiated in \cite{BLPV09} in the case of dimension $d=2$. 
Examples used to prove upper bounds in 
the two-dimensional case were constructed as modifications of the celebrated Van der Corput set. In higher dimensions we resort to the  higher-order digital nets -- a concept introduced by Dick \cite{D07}, \cite{D08} and studied from the relevant point of view in \cite{DP14a}, \cite{D14}, and \cite{M14}. In particular, we strongly  rely on the estimates of the Haar coefficients of the discrepancy function for such nets (see Lemma \ref{lem_ord2})  recently obtained by the second author \cite{M14}.

The first  main result of this work is the  upper bound in  dyadic product BMO.

\begin{thm} \label{main_result}
 For any dimension $d\ge 3$ there exists a constant $C = C(d)  > 0$ such that for every positive integer $N$, there is a point set $\P_N$ in $[0,1)^d$ with $N$ points such that
\begin{align}\label{bmo_upper}
 \left\|D_{\P_N}|\bmo^d\right\|\leq C\,\left(\log N\right)^{(d-1)/2}.
\end{align}
\end{thm}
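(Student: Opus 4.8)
The plan is to use an explicit construction—a higher-order digital net of order two in dimension $d$—and estimate its discrepancy function in the dyadic product BMO norm via its Haar coefficients. The key input is the dyadic Littlewood–Paley / square function characterization of $\bmo^d$ (dyadic product BMO): a function $f$ lies in $\bmo^d$ with norm controlled by $A$ if and only if, for every dyadic box $U\subseteq[0,1)^d$,
\begin{align*}
\frac{1}{|U|}\sum_{I\subseteq U} \frac{|\langle f,h_I\rangle|^2}{|I|} \le A^2,
\end{align*}
where the sum runs over dyadic boxes $I = I_1\times\cdots\times I_d$ contained in $U$ and $h_I$ denotes the (normalized appropriately) multivariate Haar function. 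So the entire problem reduces to a bound on a localized sum of squared Haar coefficients of $D_{\P_N}$.

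First I would recall from Lemma \ref{lem_ord2} (the estimates of the second author \cite{M14}) the pointwise bounds on $|\langle D_{\P_N}, h_I\rangle|$ for the order-two digital net, organized according to the ``shape'' of the box $I$—i.e., according to which coordinates have $I_j$ at a scale finer than roughly $\log N$ and which are coarser, and in particular how many coordinates are ``small''. The coefficients are essentially of two types: for boxes $I$ with $|I|\gtrsim 1/N$ (volume above the critical threshold $2^{-n}$ where $N\approx 2^n$) one gets the ``Roth-type'' bound where $|\langle D_{\P_N},h_I\rangle|/|I|^{1/2}$ is roughly $|I|^{1/2}$ up to lower-order factors, while for boxes below the threshold one gets decay in terms of the ratio of $|I|$ to $2^{-n}$. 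Then I would fix a dyadic box $U$ and split the sum $\sum_{I\subseteq U}|\langle D_{\P_N},h_I\rangle|^2/|I|$ according to these regimes. The main term comes from boxes $I\subseteq U$ with $|I| \ge 2^{-n}$: here one sums a quantity comparable to $|I|$ over all such $I$, and the number of dyadic boxes of a given volume inside $U$ contributes the logarithmic powers. A careful count—summing $|I|$ over dyadic subboxes of $U$ with fixed volume, then over the $O((\log N)^{d-1})$ admissible volume vectors—should yield $\sum_{I\subseteq U}|\langle D_{\P_N},h_I\rangle|^2/|I| \lesssim |U|\,(\log N)^{d-1}$, which after dividing by $|U|$ and taking the square root gives exactly $(\log N)^{(d-1)/2}$. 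The contribution of the small boxes must be shown to be no larger, using the geometric decay in $|I| 2^n$ to sum the tail.

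The main obstacle I anticipate is the bookkeeping in the ``fixed-volume'' sum inside an arbitrary dyadic box $U$: one has to handle the interplay between the constraint $I\subseteq U$ (which fixes the coarse structure of $I$ in each coordinate up to the scale of $U_j$) and the combinatorics of distributing the remaining $n$ (or fewer) levels of refinement among the $d$ coordinates, all while the Haar-coefficient bound itself depends on how many coordinates are ``degenerate'' (have $I_j = U_j$ or $I_j$ at a trivial scale). In the two-dimensional case \cite{BLPV09} this is manageable by hand; in general $d$ one needs the order-two net precisely because the naive digital net coefficients are too large, and the order-two estimates of \cite{M14} are what make the sum summable. I would also need to verify that the normalization of the net ($N = b^m$ points for a suitable prime base, here $b=2$) and the reduction to dyadic boxes causes no loss, and that the product BMO square-function characterization I invoke is the correct one (the one dual to dyadic product $H^1$). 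Once the localized square-sum estimate $|U|^{-1}\sum_{I\subseteq U}|\langle D_{\P_N},h_I\rangle|^2/|I| \lesssim (\log N)^{d-1}$ is established uniformly in $U$, the theorem follows immediately from the characterization.
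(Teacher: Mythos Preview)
Your overall strategy---order-two digital net plus Haar-coefficient bounds from Lemma \ref{lem_ord2} plugged into the Carleson-type characterization of $\bmo^d$---is the same as the paper's. Two points require correction.

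First, a minor one: the $\bmo^d$ norm \eqref{def_bmo} is a supremum over \emph{all measurable} sets $U\subset[0,1)^d$, not just dyadic boxes. In product BMO it is well known (Carleson's example) that restricting to rectangles gives a strictly weaker condition. Fortunately your bookkeeping worry is misplaced: for the large-interval regime $|j|<n-\lceil t/2\rceil$ the only fact one needs about $U$ is the trivial count $\#\{m\in\D_j:\,I_{j,m}\subset U\}\le 2^{|j|}|U|$, valid for any measurable $U$. Combined with part~(\ref{lem_ord2_part2}) of Lemma \ref{lem_ord2} and the elementary summation Lemma \ref{geom_ser}, this gives the bound $n^{d-1}|U|$ immediately; no structure of $U$ enters.

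The genuine gap is in the small-interval regime $|j|\ge n$. Your proposal appeals to ``geometric decay in $|I|\,2^n$'', but there is no such decay for the counting part $C_{\P_{2^n}}$: if $I_{j,m}$ contains a point of the net in its interior then $|\langle C_{\P_{2^n}},h_{j,m}\rangle|\simeq 2^{-|j|}$, so $2^{|j|}|\langle C_{\P_{2^n}},h_{j,m}\rangle|^2\simeq 2^{-|j|}$ with no extra smallness. Summing this naively over $m$ and over the $\simeq n^{d}$ admissible $j$ (recall $j_k\le 2n$) does not give $n^{d-1}|U|$. The paper instead splits $D_{\P_{2^n}}=C_{\P_{2^n}}-L_{\P_{2^n}}$; the linear part has the decay you want and is easy. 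For the counting part one passes to the family $\widetilde{\mathcal J}$ of \emph{maximal} dyadic boxes $J\subset U$ with $|J|\le 2^{-n}$ and nonzero coefficient, proves the covering estimate $\sum_{J\in\widetilde{\mathcal J}}|J|\preceq n^{d-1}|U|$ via a disjointness argument (fixing $d-1$ side-lengths), and then applies Bessel's inequality on each $J$ using that $J$ contains at most $2^{\lceil t/2\rceil}$ points of the net (Lemma \ref{digital_net}). This step is the heart of the small-interval estimate and is missing from your outline.
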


This result is known in the plane (see \cite[Theorem 1.7]{BLPV09}), moreover, it is sharp. A simple modification of the proof of \eqref{UpperRoth} yields the corresponding lower bound.
\begin{thm}\label{bmo_lower}
For any dimension $d\ge 3$ there exists a constant $c = c(d) >0$ such that for every positive integer $N$ and all point sets $\P_N$ in $[0,1)^d$ with $N$ points we have
\begin{align}
\left\|D_{\P_N}|\bmo^d\right\|\geq c\,\left(\log N\right)^{(d-1)/2}.
\end{align}
\end{thm}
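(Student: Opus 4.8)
\noindent\emph{Proof plan.} The plan is to deduce the bound from the orthogonality (``Roth's'') argument underlying \eqref{UpperRoth} at $p=2$, combined with the elementary fact that the dyadic product $\bmo$ norm controls a centred $L^2$ norm from below. Recall (see the next section for the precise definitions) that, writing $\{h_R\}$ for the $L^2$-normalized multivariate Haar system indexed by dyadic boxes $R\subseteq[0,1)^d$ and setting $\widehat f(R)=\langle f,h_R\rangle$, the norm $\|f\,|\,\bmo^d\|$ is a supremum of quantities $\big(|U|^{-1}\sum_{R\subseteq U}|\widehat f(R)|^2\big)^{1/2}$ over admissible test sets $U$. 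Choosing $U=[0,1)^d$ (or, if the definition forces $U$ to be a proper open set, exhausting $[0,1)^d$ by unions of dyadic boxes) already gives
\begin{align*}
\big\|f\,\big|\,\bmo^d\big\|\ \geq\ \Big(\,\sum_{R\subseteq[0,1)^d}|\widehat f(R)|^2\Big)^{1/2}\ \geq\ \frac{\langle f,F\rangle}{\|F\|_2}
\end{align*}
for every mean-zero $F$ lying in the closed linear span of $\{h_R\}$.

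Next I would insert Roth's test function. Let $n$ be the least integer with $2^n\ge 2N$, so that $n\asymp\log N$ and $2^{n-2}<N\le 2^{n-1}$. For each vector $\vec r=(r_1,\dots,r_d)$ of nonnegative integers with $r_1+\cdots+r_d=n$, partition $[0,1)^d$ into the $2^n$ dyadic boxes of sidelengths $2^{-r_1},\dots,2^{-r_d}$; since $2^n\ge 2N$, at least $2^{n-1}$ of them contain no point of $\P_N$. On such an empty box $R$ one has the standard identity $\widehat{D_{\P_N}}(R)=-N\langle x_1\cdots x_d,h_R\rangle$ (the counting part $C_{\P_N}$ has no $h_R$-component), so $|\widehat{D_{\P_N}}(R)|=4^{-d}N\,|R|^{3/2}\asymp 2^{-n/2}$ because $N\asymp 2^n=|R|^{-1}$. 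Setting $f_{\vec r}=\sum_R\operatorname{sgn}\big(\widehat{D_{\P_N}}(R)\big)\,h_R$, the sum over empty boxes of shape $2^{-\vec r}$, and $F=\sum_{\vec r}f_{\vec r}$, the function $F$ is mean-zero and lies in $\spn\{h_R\}$. Since there are $\asymp n^{d-1}$ admissible vectors $\vec r$ and the Haar functions involved are orthonormal, one gets $\|F\|_2^2=\sum_{\vec r}\#\{\text{empty boxes of shape }2^{-\vec r}\}\lesssim n^{d-1}2^{n}$ and $\langle D_{\P_N},F\rangle=\sum_{\vec r}\sum_R|\widehat{D_{\P_N}}(R)|\gtrsim n^{d-1}\,2^{n-1}\,2^{-n/2}\asymp n^{d-1}2^{n/2}$.

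Combining the two estimates,
\begin{align*}
\big\|D_{\P_N}\,\big|\,\bmo^d\big\|\ \geq\ \frac{\langle D_{\P_N},F\rangle}{\|F\|_2}\ \gtrsim\ \frac{n^{d-1}2^{n/2}}{n^{(d-1)/2}2^{n/2}}\ =\ n^{(d-1)/2}\ \asymp\ (\log N)^{(d-1)/2},
\end{align*}
which is the assertion. I do not expect a genuine obstacle here. The one point deserving care is that $D_{\P_N}$ is not itself mean-zero (indeed $\int_{[0,1)^d}D_{\P_N}$ can be of order $N$); this is harmless, because $\langle D_{\P_N},F\rangle=\langle D_{\P_N}-c,F\rangle$ for every constant $c$ as soon as $F$ is built from genuine product Haar functions, so the mean never enters the pairing. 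The other is the routine check that the definition of $\bmo^d$ indeed admits the full cube (equivalently, arbitrarily large unions of dyadic boxes) as a test set. This is precisely the sense in which Theorem~\ref{bmo_lower} is ``a simple modification of the proof of \eqref{UpperRoth}''.
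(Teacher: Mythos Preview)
Your proof is correct and follows essentially the same route as the paper: take $U=[0,1)^d$ in the definition of the $\bmo^d$ norm and invoke Roth's empty-box argument at level $|j|=n\asymp\log N$. The only cosmetic difference is that you pass through the duality pairing $\langle D_{\P_N},F\rangle/\|F\|_2$ with Roth's test function, whereas the paper simply lower-bounds the $\ell^2$ sum of Haar coefficients over the empty boxes directly; the two computations are equivalent.
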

These results say that in the case of discrepancy function, the $\bmo$ norm behaves more like $L_p$ rather than like $L_\infty$.\\

Furthermore, we  extend the result of \cite[Theorem 1.4]{BLPV09}   on  exponential Orlicz spaces to the case of arbitrary dimension. The main theorem we prove in this direction is the following.

\begin{thm} \label{main_result_exp_upper}
 In any dimension $d\ge 3$, there exists a constant $C = C(d)  > 0$ such that for every positive integer $N$, there is a point set $\P_N$ in $[0,1)^d$ with $N$ points, for which 
\begin{align}\label{exp_upper}
 \left\|D_{\P_N} \bigg| \exp \Big(L^{\frac{2}{d-1}}   \Big)\right\|\leq C\,\left(\log N\right)^{\frac{ d-1}{2}}.
\end{align}
\end{thm}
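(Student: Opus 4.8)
The plan is to obtain \eqref{exp_upper} from a square--function estimate of Chang--Wilson--Wolff type applied to $d-1$ of the $d$ coordinates. Recall (see the next section) that for any $\alpha>0$ one has $\|\cdot\|_{\exp(L^{\alpha})}\asymp\sup_{p\ge1}p^{-1/\alpha}\|\cdot\|_{L_p([0,1)^d)}$ with constants depending only on $\alpha$; combined with the sharp multiparameter martingale inequality $\|F\|_{L_p}\le(C\sqrt p)^{m}\|S_mF\|_{L_p}$ (for $p\ge2$, with $S_m$ the $m$-fold iterated dyadic square function, obtained by iterating in vector-valued form the classical one-parameter estimate), this yields the endpoint bound $\|F\|_{\exp(L^{2/m})}\le C(m)\,\|S_mF\|_{L_\infty}$. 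We take $m=d-1$ and let $\P_N$ be the order-$2$ digital net(s) used in the proof of Theorem~\ref{main_result} (with $2^n\le N<2^{n+1}$). It then suffices to pick a suitable $(d-1)$-parameter square function $S_{d-1}$ of $D_{\P_N}$ and prove that $\|S_{d-1}D_{\P_N}\|_{L_\infty([0,1)^d)}\le C(d)\,(\log N)^{(d-1)/2}$.

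Here is how I would set this up. Expand $D_{\P_N}=\sum_{\vec j}f_{\vec j}$ in the $d$-dimensional Haar system, $f_{\vec j}$ being the part carried by the Haar frequencies of level $\vec j=(j_1,\dots,j_d)$, and --- following the device of Roth and Schmidt --- single out the last coordinate: for $\vec k=(k_1,\dots,k_{d-1})$ put $G_{\vec k}:=\sum_{j_d}f_{(k_1,\dots,k_{d-1},j_d)}$, so that $D_{\P_N}=\sum_{\vec k}G_{\vec k}$ becomes a $(d-1)$-parameter product martingale decomposition in $x_1,\dots,x_{d-1}$ (with $x_d$ a passive variable) and $S_{d-1}D_{\P_N}:=\bigl(\sum_{\vec k}|G_{\vec k}|^2\bigr)^{1/2}$. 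The crucial ingredient --- to be extracted from the Haar-coefficient estimates of Lemma~\ref{lem_ord2} --- is the uniform bound
\begin{align*}
\|G_{\vec k}\|_{L_\infty([0,1)^d)}\ \le\ C(d)\,\min\bigl(1,\ 2^{-c(|\vec k|_1-n)}\bigr),\qquad |\vec k|_1:=k_1+\dots+k_{d-1},
\end{align*}
which glues together a Roth-type telescoping of the $j_d$-sum over the scales $|\vec j|_1:=j_1+\dots+j_d\le n$ (where the exact box counts of the digital net force consecutive scales to cancel) and the additional decay of the order-$2$ Haar coefficients over the scales $|\vec j|_1>n$. Since there are $\binom{n+d-1}{d-1}\asymp n^{d-1}$ indices $\vec k$ with $|\vec k|_1\le n$ while the others contribute a convergent geometric series, this gives $\|S_{d-1}D_{\P_N}\|_{L_\infty}\le C(d)\,n^{(d-1)/2}\asymp C(d)\,(\log N)^{(d-1)/2}$, which by the first paragraph completes the proof.

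I expect the main obstacle to be the uniform $L_\infty$-bound on the $G_{\vec k}$: it is the $d$-dimensional, order-$2$ analogue of the telescoping identity underpinning Roth's $L_2$ estimate, and it is exactly there that both halves of Lemma~\ref{lem_ord2} are needed --- the equidistribution of the net at scales $|\vec j|_1\le n$, which collapses the $j_d$-summation into a single bounded box function, and the higher-order cancellation of the Haar coefficients at scales $|\vec j|_1>n$, which keeps the tail under control (this last point is also precisely why order-$2$, rather than order-$1$, nets must be used). On the harmonic-analysis side the only structural subtlety is that the square-function argument has to be carried out in exactly $d-1$ of the $d$ coordinates: losing a factor $\sqrt p$ in all $d$ of them would only yield $\exp(L^{2/d})$, which is strictly weaker than the asserted $\exp(L^{2/(d-1)})$. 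Everything else --- the $\exp/L_p$ comparison, the iterated square-function inequality, and the count of scale vectors --- is standard or already in hand.
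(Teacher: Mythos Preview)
Your high-level plan --- freeze $x_d$, run the $(d-1)$-parameter Chang--Wilson--Wolff inequality in $x_1,\dots,x_{d-1}$, and reduce everything to the pointwise estimate $\|G_{\vec k}\|_{L_\infty}\preceq 1$ --- is sound and is in fact a cleaner packaging of what the paper does (the paper instead slices by $|j|$, applies the hyperbolic CWW on each slice, and treats the counting part at scales $|j|\ge n$ by a separate $(d-1)$-dimensional Littlewood--Paley argument). However, your justification of the key bound has a real gap. You say the estimate is ``to be extracted from the Haar-coefficient estimates of Lemma~\ref{lem_ord2}'', but Lemma~\ref{lem_ord2} alone, used termwise in $j_d$, only gives $\|G_{\vec k}\|_{L_\infty}\preceq n$: part~(\ref{lem_ord2_part2}) handles $j_d$ with $|\vec k|_1+j_d<n$ and sums to $O(1)$, but for $j_d$ with $|\vec k|_1+j_d\ge n$ part~(\ref{lem_ord2_part1}) yields only $2^{|j|}|\langle D_{\P_N},h_{j,m(x)}\rangle|\preceq 1$ at the (up to $\sim n$) ``bad'' scales where $I_{j,m(x)}$ meets the net, and these do \emph{not} telescope by any equidistribution argument. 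What actually saves the day --- and this is exactly the hardest step in the paper's proof --- is to resum the $j_d$-tail \emph{before} taking absolute values: one recognizes $\sum_{j_d\ge n-|\vec k|_1}2^{j_d}\langle\chi_{(z_d,1)},h_{j_d,m_d}\rangle h_{j_d,m_d}(x_d)$ as the Haar expansion of $\chi_{(z_d,1)}$ on the dyadic interval $I'$ of length $2^{-(n-|\vec k|_1)}$ containing $x_d$, hence bounded by $2$; and then only the $O(2^{\lceil t/2\rceil})$ points of the net lying in $I_{\vec k,m_*}\times I'$ (a box of volume $2^{-n}$, so Lemma~\ref{digital_net} applies) contribute. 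This is the paper's ``parent interval'' argument, and it uses Lemma~\ref{digital_net} rather than Lemma~\ref{lem_ord2}.

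A second, smaller issue: your claimed geometric decay $\|G_{\vec k}\|_{L_\infty}\preceq 2^{-c(|\vec k|_1-n)}$ for $|\vec k|_1>n$ is false for the counting part (the same computation gives only $O(1)$, with no decay). The correct substitute is that $G_{\vec k}^{\mathrm{count}}\equiv 0$ unless every $k_i\le 2n$ (since the net points have $2n$-bit coordinates), so there are only $\preceq n^{d-1}$ relevant $\vec k$'s and the square-function bound $\|S_{d-1}D_{\P_N}\|_{L_\infty}\preceq n^{(d-1)/2}$ still follows. With these two fixes your outline goes through, and the result is essentially a reorganized version of the paper's argument.
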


Some remarks are in order for this theorem. This bound has been recently conjectured in several different sources. A similar (albeit weaker)  estimate has been recently proved  for the so-called Chen--Skriganov nets independently in \cite{S13} and \cite{ABLV14}  for the smaller $\exp \big( L^{2/(d+1)} \big)$ norm. The authors of both papers conjectured that it should be  improved to the  $\exp \big(L^{2/(d-1)} \big)$ estimate stated above, in addition, the same conjecture has been made in the survey paper \cite[Section 9]{DP14b}. However, until now this claim remained unproved. 

The exponential integrability exponent $2/(d-1)$ is quite natural for a variety of reasons. First, it is consistent with the general ideology that the problem effectively has $d-1$ ``free parameters'' (see \cite{B11} for a detailed discussion) and therefore the Littlewood--Paley inequalities should be applied $d-1$ times: see \S\ref{s.LP}, in particular, estimate \eqref{e.LPh} of Lemma \ref{LPd}. Furthermore, this estimate is consistent with the $L_\infty$-discrepancy bound \eqref{Linfty} of the order $(\log N)^{d-1}$ valid for digital nets, see \S\ref{remarks}. 

For this very reason the complementary lower bound is presently beyond reach. There are indications that it should be almost as difficult as one of the main open problems in the subject -- the lower bound of the $L_\infty$-discrepancy. The proof of the corresponding lower bound  in dimension $d=2$ (\cite[Theorem 1.4]{BLPV09}) uses techniques similar to the proof of the two-dimensional $L_\infty$ bound \eqref{schmidt}, which are not available in higher dimensions. Besides, if one believes that the correct $L_\infty$ bound is $(\log N)^{d/2}$, then estimate \eqref{exp_upper} will probably not be sharp -- in this case the norm on the left-hand side should be the subgaussian $\exp (L^2)$, see \S\ref{remarks} for details.

During the final stages of  preparation of the present manuscript, we have learned about a recent preprint of Skriganov \cite{S14} written almost simultaneously, where inequality \eqref{exp_upper} is proved for {\emph{random}} digit shifts of an arbitrary digital $(t,n,d)$-net (although the author doesn't state the result in exponential form, but instead writes down $L_p$ estimates with explicit dependence  on $p$). The techniques of Skriganov's work  exploit randomness in a crucial way. In contrast, our proof is deterministic and is applicable to any higher order digital net (in fact, it suffices to take order $\sigma = 2$). Concrete construction of such nets are given, e.g., in \cite{D14}.

Interpolating the estimate of Theorem \ref{main_result_exp_upper} with the well-known $L_\infty$ bound  \eqref{Linfty} we obtain the following result, which is a direct analog of \cite[Theorem 1.4]{BLPV09}.
\begin{cor} \label{cor_exp_upper}
 For each $\beta$ satisfying  $\frac{2}{d-1}\leq \beta<\infty$, there exists a constant $C_\beta > 0$ such that for every positive integer $N$, there is a point set $\P_N$ in $[0,1)^d$ with $N$ points such that
\begin{align}\label{exp_upper1}
 \left\|D_{\P_N}|\exp(L^\beta)\right\|\leq C_\beta \,\left(\log N\right)^{(d-1)-\frac1{\beta}}.
\end{align}
\end{cor}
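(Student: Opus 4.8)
The plan is to derive \eqref{exp_upper1} by interpolating the endpoint estimate of Theorem~\ref{main_result_exp_upper} against the trivial $L_\infty$ bound \eqref{Linfty}, using one and the same point set $\P_N$ for both. The key preliminary remark is that the point set furnished by Theorem~\ref{main_result_exp_upper} — a digital net of order two — also satisfies the classical star-discrepancy bound $\|D_{\P_N}|L_\infty([0,1)^d)\|\le C(\log N)^{d-1}$ of \eqref{Linfty}, which, as noted in \S\ref{remarks}, is valid for digital nets. Hence a single $\P_N$ simultaneously obeys $\|D_{\P_N}|\exp(L^{2/(d-1)})\|\le C(\log N)^{(d-1)/2}$ and $\|D_{\P_N}\|_\infty\le C(\log N)^{d-1}$, and it is for this $\P_N$ that we verify \eqref{exp_upper1}.

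Next I would pass to the $L_p$ reformulation of the exponential Orlicz norms via the standard equivalence $\|f|\exp(L^\beta)\|\asymp\sup_{p\ge1}p^{-1/\beta}\|f\|_{L_p}$, with implied constants depending only on $\beta$ (and the measure space). Under this equivalence, Theorem~\ref{main_result_exp_upper} is the same as the family of inequalities
\begin{align*}
\|D_{\P_N}\|_{L_p}\le C\,p^{(d-1)/2}(\log N)^{(d-1)/2},\qquad p\ge1,
\end{align*}
while \eqref{Linfty} gives $\|D_{\P_N}\|_{L_p}\le\|D_{\P_N}\|_\infty\le C(\log N)^{d-1}$ for every $p$. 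The two right-hand sides agree exactly at $p=\log N$, which dictates the split: use the first bound for $1\le p\le\log N$ and the second for $p\ge\log N$.

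The heart of the matter is then a one-line computation of $\sup_{p\ge1}p^{-1/\beta}\|D_{\P_N}\|_{L_p}$. For $1\le p\le\log N$ the first bound gives $p^{-1/\beta}\|D_{\P_N}\|_{L_p}\lesssim p^{(d-1)/2-1/\beta}(\log N)^{(d-1)/2}$; since $\beta\ge2/(d-1)$ the exponent $(d-1)/2-1/\beta$ is nonnegative, so this quantity is nondecreasing in $p$ and attains its maximum $(\log N)^{(d-1)-1/\beta}$ at $p=\log N$. For $p\ge\log N$ the $L_\infty$ bound gives $p^{-1/\beta}\|D_{\P_N}\|_{L_p}\lesssim p^{-1/\beta}(\log N)^{d-1}$, which is decreasing in $p$ and again attains its maximum $(\log N)^{(d-1)-1/\beta}$ at $p=\log N$. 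Combining the two ranges and invoking the Orlicz--$L_p$ equivalence once more yields $\|D_{\P_N}|\exp(L^\beta)\|\le C_\beta(\log N)^{(d-1)-1/\beta}$, which is \eqref{exp_upper1}.

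This argument is essentially routine, and I do not expect a genuine obstacle; the only points needing a little care are the precise statement and constants in the $\exp(L^\beta)$--$L_p$ equivalence (in particular the behavior for small $p$, which can simply be absorbed into $C_\beta$) and the remark that the order-two digital net of Theorem~\ref{main_result_exp_upper} does satisfy \eqref{Linfty}. The whole proof should therefore be quite short, mirroring the two-dimensional argument of \cite[Theorem 1.4]{BLPV09}.
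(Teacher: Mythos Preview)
Your proposal is correct and follows essentially the same approach as the paper: interpolate between the $\exp(L^{2/(d-1)})$ bound of Theorem~\ref{main_result_exp_upper} and the $L_\infty$ bound \eqref{Linfty} (via Lemma~\ref{net_discr}) for the same order-two digital net, using the $L_p$ characterization of the exponential Orlicz norms. The only cosmetic difference is that the paper packages the interpolation step as Proposition~\ref{interpolationexp} (using $\|f\|_{L_p}\le\|f\|_{L_q}^{\alpha/\beta}\|f\|_{L_\infty}^{1-\alpha/\beta}$ with $q=\alpha p/\beta$) and applies it as a black box, whereas you do the equivalent computation by hand via a case split at $p=\log N$.
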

Since this result is even more closely tied to the $L_\infty$ estimates, no corresponding lower bounds are available.

Our strategy resonates with  that of \cite{BLPV09}, but we also strongly rely on very recent results  and constructions: digital nets of higher order \cite{D07}, \cite{D08} and their explicit constructions   \cite{D14}, \cite{DP14b},  Haar expansions of the discrepancy function of such nets used in the study of discrepancy in Besov spaces with dominating mixed smoothness and in $L_2$, see \cite{DP14a}, \cite{DP14b}, \cite{D14}, \cite{M14}. For further results on this topic see \cite{CS02}, \cite{CS08}, \cite{Hi10}, \cite{M13a}, \cite{M13b}, \cite{S06}, \cite{T10}.
As general references for studies of the discrepancy function we refer to the monographs \cite{BC87}, \cite{DP10}, \cite{KN74}, \cite{M99}, \cite{NW10}  and surveys \cite{B11}, \cite{Hi14}, \cite{M13c}.

We shall write $A \preceq B$ if there exists an absolute  constant $c>0$ such that $A \leq c\,B$. We write $A \simeq B$ if $A\preceq B $ and $B \preceq A$. The implicit constants in this paper do not depend on the number of   points $N$ (but may depend on some other parameters, such as dimension, integrability index etc).

\section{Preliminary facts}
\subsection{Haar bases}

We denote $\N_{-1}=\N_0\cup\{-1\}$. Let $\D_j = \{0,1,\ldots, 2^j-1\}$ for $j \in \N_0$ and $\D_{-1} = \{0\}$. For $j = (j_1,\dots,j_d)\in\N_{-1}^d$ let $\D_j = \D_{j_1}\times\ldots\times \D_{j_d}$. For $j\in\N_{-1}^d$ we write $|j| = \max(j_1,0) + \ldots + \max(j_d,0)$.

For $j \in \N_0$ and $m \in \D_j$ we call the interval $I_{j,m} = \big[ 2^{-j} m, 2^{-j} (m+1) \big)$ the $m$-th dyadic interval in $[0,1)$ on level $j$. We put $I_{-1,0}=[0,1)$ and call it the $0$-th dyadic interval in $[0,1)$ on level $-1$. Let $I_{j,m}^+ = I_{j + 1,2m}$ and $I_{j,m}^- = I_{j + 1,2m+1}$ be the left and right half of $I_{j,m}$, respectively. 

For $j \in \N_{-1}^d$ and $m = (m_1, \ldots, m_d) \in \D_j$ we call $I_{j,m} = I_{j_1,m_1} \times \ldots \times I_{j_d,m_d}$ the $m$-th dyadic interval in $[0,1)^d$ at level $j$. We call the number $|j|$ the order of the dyadic interval $I_{j,m}$. Its volume is then $| I_{j,m} | = 2^{-|j|}$.


An important combinatorial fact is that $\# \{ j \in \mathbb N_0^d: |j| = n \} \simeq n^{d-1}$, where $\#$ stands for the cardinality of a set.

Let $j \in \N_{0}$ and $m \in \D_j$. Let $h_{j,m}$ be the function on $[0,1)$ with support in $I_{j,m}$ and  constant values $1$ on $I_{j,m}^+$ and $-1$ on $I_{j,m}^-$. We put $h_{-1,0} = \chi_{I_{-1,0}}$ on $[0,1)$. Notice that we normalize the Haar functions in $L_\infty$, rather than $L_2$.

Let $j \in \N_{-1}^d$ and $m \in \D_j$. The function $h_{j,m}$ given as the tensor product
\[ h_{j,m}(x) = h_{j_1,m_1}(x_1) \cdots h_{j_d,m_d}(x_d) \]
for $x = (x_1, \ldots, x_d) \in [0,1)^d$ is called a dyadic Haar function on $[0,1)^d$. The set of functions $\{h_{j,m}:\; j \in \N_{-1}^d, \, m \in \D_j\}$ is called dyadic Haar basis on $[0,1)^d$. 

It is well known that the system
\[ \left\{2^{\frac{|j|}{2}}h_{j,m} \,:\,j\in\N_{-1}^d,\,m\in \D_j\right\} \]
is an orthonormal basis in $L_2([0,1)^d)$, an unconditional basis in $L_p([0,1)^d)$ for $1 < p < \infty$, and a conditional basis in $L_1([0,1)^d)$. 

\subsection{Littlewood--Paley inequalities}\label{s.LP}

For any function $f\in L_2([0,1)^d)$ we have Parseval's identity
\begin{equation}\label{parseval}
 \left\|f|L_2([0,1)^d)\right\|^2 = \sum_{j \in \N_{-1}^d} 2^{|j|} \sum_{m\in \D_j}|\langle f,h_{j,m}\rangle|^2.  
 \end{equation}
Littlewood--Paley inequalities are a generalization of this statement to $L_p$-spaces.  For a function $f:[0,1]^d \rightarrow \mathbb R$, the (dyadic) Littlewood--Paley square function is defined as  
\begin{equation*}
Sf (x)  =   \Bigg(\sum_{\substack{j\in\N_{-1}^d }} 2^{2|j|} \sum_{m\in \D_j} |\langle f,h_{j,m}\rangle|^2 \, \chi_{I_{j,m}}\Bigg)^{1/2}.
\end{equation*}
It is a classical  fact and a natural extension of \eqref{parseval} that in dimension $d=1$, the $L_p$-norm of $f$ can be characterized using the square function, i.e. for each $1<p<\infty$ there exist constants $A_p$, $B_p >0$ such that 
\begin{equation}\label{LP1D}
A_p \| Sf|L_p([0,1)) \| \le \| f |L_p([0,1)) \| \le B_p \| Sf |L_p([0,1)) \|.
\end{equation}
Two remarks are important. First, it is well known that $B_p \simeq  \sqrt{p}$. Second, estimates \eqref{LP1D} continue to hold for Hilbert space-valued functions $f$. This  allows one to  extend the inequalities  to the case of multivariate functions $f:[0,1]^d \rightarrow \mathbb R$ by iterating the one-dimensional estimates $d$ times, thus picking up constants $A_p^d$ and $B_p^d \simeq p^{d/2}$. 

However, if the function $f$ is represented by a {\emph{hyperbolic}} sum of Haar wavelets, i.e. a sum of Haar functions supported by intervals of fixed order, $f \in \textup{span} \{ h_{j,m}:\, |j| = n \}$, in other words, when the number of ``free parameters'' is $d-1$, then the one-dimensional Littlewood--Paley inequalities \eqref{LP1D} only need to be applied $d-1$ times, yielding  constants $A_p^{d-1}$ and $B_p^{d-1} \simeq  p^{\frac{d-1}{2}}$. We summarize  the estimates useful for our purposes in the following lemma.
\begin{lem}\label{LPd}
Let $1<p<\infty$. 
\begin{enumerate}[(i)]
\item {\emph{Multiparameter Littlewood--Paley inequality:}} For a function $f: [0,1]^d \rightarrow  \mathbb R$ we have 
\begin{equation}\label{e.LPd}
  \left\|f| \, L_p([0,1)^d)\right\| \preceq p^{\frac{d}{2}}   \left\| Sf | \, L_p([0,1)^d)\right\|.
\end{equation}
\item {\emph{Hyperbolic Littlewood--Paley inequality:}} Assume that  $f: [0,1]^d \rightarrow  \mathbb R$ is a hyperbolic sum of Haar functions, i.e. $f \in \textup{span} \{ h_{j,m}:\, |j| = n \}$ for some $n \in \mathbb N$. Then
\begin{equation}\label{e.LPh}
  \left\|f| \, L_p([0,1)^d)\right\| \preceq p^{\frac{d-1}{2}}   \left\| Sf | \, L_p([0,1)^d)\right\|.
\end{equation}
\end{enumerate}
\end{lem}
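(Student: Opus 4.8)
The plan is to deduce both inequalities from the one-dimensional Littlewood--Paley estimate \eqref{LP1D}, applied iteratively, keeping careful track of the growth of the constant in $p$. For part (i), I would proceed by induction on the dimension $d$, treating a function $f:[0,1)^d\to\mathbb R$ as a function of $x_d\in[0,1)$ with values in the Hilbert space $L_2([0,1)^{d-1})$. Since \eqref{LP1D} holds verbatim for Hilbert-space-valued functions, applying it in the variable $x_d$ converts the $L_p$ norm of $f$ into the $L_p$ norm of the one-dimensional square function in $x_d$, at the cost of a factor $B_p\simeq\sqrt p$; iterating this $d$ times over all $d$ coordinates and collapsing the resulting iterated square function into the full $d$-parameter square function $Sf$ (using that the iterated square function equals $Sf$ because of the tensor-product structure of the Haar system and Fubini/Minkowski for the intermediate $\ell_2$ norms) yields \eqref{e.LPd} with constant $B_p^d\simeq p^{d/2}$. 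The only mildly technical point is the bookkeeping needed to identify the fully iterated square function with $Sf$, which is standard once one writes $\langle f,h_{j,m}\rangle$ as a product over the coordinates.

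For part (ii), the key observation is that when $f\in\operatorname{span}\{h_{j,m}:|j|=n\}$, the index $j=(j_1,\dots,j_d)$ is constrained by $j_1+\dots+j_d=n$, so that the last coordinate $j_d$ is determined by the first $d-1$. I would therefore apply the one-dimensional Littlewood--Paley inequality only in the first $d-1$ variables $x_1,\dots,x_{d-1}$, picking up a factor $B_p^{d-1}\simeq p^{(d-1)/2}$, and handle the remaining single variable $x_d$ trivially: in that last variable the Haar support level $j_d=n-(j_1+\dots+j_{d-1})$ is fixed, the Haar functions $h_{j_d,m_d}$ on a fixed level have disjoint supports, so the $L_p$ norm in $x_d$ of the partial sum is controlled by (indeed comparable to) the $L_p$ norm of the corresponding one-dimensional square function \emph{without any loss} of a $p$-power, since for a sum of functions with disjoint supports the square function is simply the absolute value up to normalization. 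Assembling these estimates gives \eqref{e.LPh} with the improved constant $p^{(d-1)/2}$.

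The main obstacle, such as it is, lies in the precise argument that the final variable costs nothing: one must verify that after extracting the square function in $d-1$ variables, what remains is a function which, on each dyadic rectangle in the first $d-1$ coordinates, is a sum over $m_d\in\mathbb D_{j_d}$ of Haar functions at the single fixed level $j_d$, and that for such a sum the $L_p([0,1))$ norm is (up to the $L_2$-normalization $2^{j_d/2}$) exactly the $L_p([0,1))$ norm of its square function because the summands have pairwise disjoint supports. I would make this rigorous by freezing all coordinates except $x_d$, observing that the resulting one-variable function is constant on each $I_{j_d,m_d}$, and computing both norms directly. Everything else is a routine iteration of \eqref{LP1D} together with Minkowski's integral inequality to interchange the $L_p(dx)$ norms with the $\ell_2$ sums over Haar coefficients at intermediate stages.

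\begin{rem}
One could alternatively phrase part (ii) as a corollary of part (i) applied in dimension $d-1$: fixing the values of the Haar coefficients, the hyperbolic sum is, for each choice of $(j_1,\dots,j_{d-1})$, a ``tensor leaf'' whose $x_d$-behavior is rigid, so the genuinely multiparameter part of the estimate lives in $d-1$ dimensions. Either route leads to the same constant $p^{(d-1)/2}$, which is exactly what is needed to produce the exponent $2/(d-1)$ in Theorem \ref{main_result_exp_upper}.
\end{rem}
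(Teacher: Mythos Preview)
Your proposal is correct and follows essentially the same approach as the paper: the paper does not give a formal proof of this lemma but, in the paragraph preceding it, outlines exactly the iteration argument you describe --- applying the Hilbert-space-valued one-dimensional inequality \eqref{LP1D} $d$ times for (i) and $d-1$ times for (ii), with the remark that for hyperbolic sums the number of ``free parameters'' is $d-1$ --- and refers to \cite{B11} for further details. Your explanation of why the last variable costs no factor of $\sqrt{p}$ (disjoint supports of Haar functions at a fixed level, so that the one-variable square function coincides pointwise with the absolute value) is precisely the mechanism underlying the paper's claim, spelled out rather than left implicit.
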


A more detailed discussion of the Littlewood--Paley inequalities and their applications in discrepancy theory can be found in \cite{B11}.

\subsection{Bounded mean oscillation and exponential Orlicz spaces}

There are  different definitions of the space of functions of bounded mean oscillation in the multivariate case. The appropriate version in our setting is the so-called {\emph{dyadic product}}  $\bmo^d$ introduced in \cite{Be}. For an integrable function $f:[0,1]^d \rightarrow \mathbb R$ we define 
\begin{equation}\label{def_bmo}
 \left\|f|\bmo^d\right\| = \sup_{U\subset [0,1)^d} \left( |U|^{-1} \sum_{j \in \N_0^d} 2^{|j|} \sum_{\substack{m\in \D_j \\ I_{j,m} \subset U}} |\langle f,h_{j,m}\rangle|^2 \right)^{1/2}, 
 \end{equation}
where the supremum is taken over all measurable sets $U\subset [0,1)^d$. The space $\bmo^d$ contains all  integrable functions $f$ with  finite norm $\left\|f|\bmo^d\right\|$. Notice that technically $ \left\|f|\bmo^d\right\|$ is only a seminorm, since it vanishes on linear combinations of functions, which are constant in some of the coordinate directions, therefore formally we need to take a factor space over such functions.

To give some intuition behind this definition, we notice that when $d=1$ and $U$ is a dyadic interval, we have by Parseval's identity $$\displaystyle{ |U|^{-1} \sum_{j \in \N_0} 2^{|j|} \sum_{\substack{m\in \D_j \\ I_{j,m} \subset U}} |\langle f,h_{j,m}\rangle|^2 = |U|^{-1} \int_U \Big| f - \langle f\rangle_U \Big|^2 dx},$$ where $\langle f\rangle_U$ is the mean of $f$ over $U$ -- this is precisely the expression which arises in the definition of the one-dimensional dyadic $\bmo$ space. The precise technical definition of the norm \eqref{def_bmo} turns out to be the correct multiparameter dyadic extension, which preserves the most natural properties  of $\bmo$, in particular, the celebrated $H_1 - \bmo$ duality: {\emph{dyadic product BMO}} is the dual of the {\emph{dyadic}} Hardy space $H_1$ -- the space of functions $f\in L_1$ with integrable Littlewood--Paley square function, i.e. such that $Sf \in L_1$, see \cite{Be}. \\

We remark that a non-dyadic version of this space, the {\emph{Chang--Fefferman product}} BMO was introduced and studied in \cite{CF80}. This space also admits a characterization similar to \eqref{def_bmo}, but with smoother functions in  place of Haar wavelets. For the relation between these spaces see e.g. \cite{PW}.\\

In order to introduce  the definition of the exponential Orlicz spaces, we start by briefly discussing general Orlicz spaces. We refer to \cite{LT77} for more information. Let $(\Omega,P)$ be a probability space and let $\mathbb{E}$ denote the expectation over $(\Omega,P)$. Let $\psi:\,[0,\infty)\rightarrow[0,\infty)$ be a convex function, such that $\psi(x) = 0$ if and only if $x=0$. For a $(\Omega,P)$-measurable real valued function $f$ we define
\[ \left\|f|L^\psi\right\| = \inf\{K>0:\, \mathbb{E}\psi(|f|/K)\leq 1\}, \]
where $\inf\emptyset = \infty$. The Orlicz space (associated with $\psi$) $L^\psi$ consists of all functions $f$ with finite norm $\left\|f|L^\psi\right\|$.

Let $\alpha>0$ and  let $\psi_\alpha$  be a convex function which equals $\e^{x^\alpha} - 1$ for $x$ sufficiently large, depending upon $\alpha$ (for $\alpha \ge 1$ this function may be used for all $x\ge0$). We denote $\exp(L^\alpha)=L^{\psi_\alpha}$. 

The following proposition yields a standard way to compute the $\exp (L^\alpha) $ norms. Its proof is a simple application of Taylor's series for $e^x$ and Stirling's formula.

\begin{prp}\label{exp_Lp}
For any $\alpha >0$, the following equivalence holds
\begin{equation}\label{e.exp_Lp}
\left\|f|\exp(L^\alpha)\right\|  \simeq \, \sup_{p>1} \, p^{-\frac1{\alpha}} \cdot \left\| f | L_p ([0,1)^d)\right\|.
\end{equation}
\end{prp}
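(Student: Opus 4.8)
The plan is to prove the two-sided equivalence \eqref{e.exp_Lp} directly from the definition of the Orlicz norm, splitting into the two inequalities. Throughout, write $M_p = \|f|L_p([0,1)^d)\|$ and $S = \sup_{p>1} p^{-1/\alpha} M_p$, and recall that by convexity of $x \mapsto x^r$ for $r \ge 1$ we may freely pass between integer and real values of $p$ at the cost of absolute constants, so it suffices to work with $p = n \in \mathbb{N}$ large.

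\medskip

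\emph{Step 1: the bound $\|f|\exp(L^\alpha)\| \preceq S$.} Assume $S < \infty$ (otherwise there is nothing to prove). Fix $K > 0$ to be chosen, and expand, using the Taylor series for $\e^x$ and monotone convergence,
\begin{equation*}
\mathbb{E}\, \psi_\alpha(|f|/K) \;\le\; \sum_{k \ge k_0} \frac{1}{k!}\, \mathbb{E}\Big( \frac{|f|^{\alpha k}}{K^{\alpha k}} \Big) \;+\; C_0 \;=\; \sum_{k \ge k_0} \frac{M_{\alpha k}^{\alpha k}}{k!\, K^{\alpha k}} \;+\; C_0,
\end{equation*}
where $k_0$ and the finite constant $C_0$ absorb the terms where $\psi_\alpha$ differs from $\e^{x^\alpha}-1$ (for $\alpha \ge 1$ one may take $k_0 = 1$, $C_0 = -1$). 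By definition of $S$ one has $M_{\alpha k} \le S (\alpha k)^{1/\alpha}$, hence $M_{\alpha k}^{\alpha k} \le S^{\alpha k} (\alpha k)^k$, and Stirling's formula $k! \ge (k/\e)^k$ gives
\begin{equation*}
\frac{M_{\alpha k}^{\alpha k}}{k!\, K^{\alpha k}} \;\le\; \Big( \frac{\alpha \e\, S^\alpha}{K^\alpha} \Big)^{k}.
\end{equation*}
Choosing $K = c_\alpha S$ with $c_\alpha := (2\alpha \e)^{1/\alpha}$ makes the series a convergent geometric series with sum $\le 1$ (after also enlarging $c_\alpha$ if needed to swallow $C_0$ and the finitely many exceptional low-order terms, using again that $M_p \le S p^{1/\alpha}$ controls each of them). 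Therefore $\mathbb{E}\,\psi_\alpha(|f|/K) \le 1$, so $\|f|\exp(L^\alpha)\| \le K \preceq S$.

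\medskip

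\emph{Step 2: the reverse bound $S \preceq \|f|\exp(L^\alpha)\|$.} Let $K = \|f|\exp(L^\alpha)\|$; we may assume $K < \infty$ and, by homogeneity, rescale so that $K = 1$, i.e. $\mathbb{E}\,\psi_\alpha(|f|) \le 1$. For $x$ large, $\psi_\alpha(x) + 1 = \e^{x^\alpha} \ge \frac{1}{k!} x^{\alpha k}$ for every $k$; combining with the lower bound on $\psi_\alpha$ for small $x$ (where $\psi_\alpha$ is still convex and positive) one gets, for each integer $k \ge 1$, a constant $c_k$ with $x^{\alpha k} \le c_k\big( \psi_\alpha(x) + 1 \big)$ for all $x \ge 0$; the point is to track that $c_k \preceq k!$. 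Taking expectations, $M_{\alpha k}^{\alpha k} = \mathbb{E}|f|^{\alpha k} \le c_k(\mathbb{E}\,\psi_\alpha(|f|) + 1) \le 2 c_k \preceq k!$. Stirling ($k! \le \e\, k\, (k/\e)^k \le (C k)^k$) then yields $M_{\alpha k} \preceq (Ck)^{1/\alpha} \preceq k^{1/\alpha}$, uniformly in $k$. Finally, for arbitrary real $p > 1$ pick the integer $k$ with $\alpha k \ge p > \alpha(k-1)$; monotonicity of $L_p$ norms on a probability space gives $M_p \le M_{\alpha k} \preceq k^{1/\alpha} \simeq p^{1/\alpha}$, so $p^{-1/\alpha} M_p \preceq 1 = \|f|\exp(L^\alpha)\|$, and taking the supremum over $p$ finishes the proof.

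\medskip

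The only genuinely delicate point — the ``main obstacle'' — is bookkeeping the $\alpha$-dependence of the exceptional low-order terms when $0 < \alpha < 1$, i.e. making precise that replacing $\psi_\alpha$ by $\e^{x^\alpha} - 1$ (which is \emph{not} convex near $0$ for such $\alpha$) changes the moments only by an absolute multiplicative constant and an additive constant, both harmless after the geometric-series summation. This is a routine comparison of two functions that agree for large $x$ and are each bounded and bounded below on the compact complementary range, but it should be stated carefully. Everything else is the standard Taylor/Stirling computation and the elementary fact that on a probability space $p \mapsto M_p$ is nondecreasing.
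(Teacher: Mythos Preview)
Your proof is correct and follows exactly the approach the paper indicates: the paper does not give a detailed argument for this proposition but simply states that ``its proof is a simple application of Taylor's series for $e^x$ and Stirling's formula,'' which is precisely what you carry out. Your careful handling of the low-order terms for $0<\alpha<1$ (where $\psi_\alpha$ need not equal $e^{x^\alpha}-1$ near the origin) goes beyond what the paper sketches but is routine and correctly done.
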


The next proposition is a variant of the famous Chang--Wilson--Wolff inequality \cite{CWW85} which states that  boundedness of the square function implies certain exponential integrability of the original function. The hyperbolic version  presented here can be easily deduced from the  Littlewood--Paley inequality with sharp constants \eqref{e.LPh} and the previous proposition.

\begin{prp}[Hyperbolic Chang--Wilson--Wolff inequality]\label{changwilsonwolff}
Assume that $f$ is a hyperbolic sum of multiparameter  Haar functions, i.e. $f \in \textup{span} \{ h_{j,m}:\, |j| = n \}$    for some $n \in \mathbb N$. Then  
\begin{equation}\label{cww}
\left\|f|\exp \big(L^{2/(d-1) } \big)\right\| \preceq \left\|S(f)|L_\infty([0,1)^d)\right\|. 
\end{equation}
\end{prp}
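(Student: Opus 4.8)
The plan is to combine the hyperbolic Littlewood--Paley inequality \eqref{e.LPh} with the Orlicz-space characterization \eqref{e.exp_Lp} in a direct way. First I would fix a hyperbolic sum $f \in \textup{span}\{h_{j,m}:\, |j|=n\}$ and, for each $p>1$, apply Lemma \ref{LPd}(ii) to get
\begin{equation*}
\left\|f|L_p([0,1)^d)\right\| \preceq p^{\frac{d-1}{2}} \left\|Sf|L_p([0,1)^d)\right\| \le p^{\frac{d-1}{2}} \left\|Sf|L_\infty([0,1)^d)\right\|,
\end{equation*}
where the last step just uses that we are on a probability space, so the $L_p$ norm is dominated by the $L_\infty$ norm. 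Dividing by $p^{(d-1)/2}$ and taking the supremum over $p>1$ gives
\begin{equation*}
\sup_{p>1} p^{-\frac{d-1}{2}} \left\|f|L_p([0,1)^d)\right\| \preceq \left\|Sf|L_\infty([0,1)^d)\right\|.
\end{equation*}

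Next I would invoke Proposition \ref{exp_Lp} with $\alpha = 2/(d-1)$, so that $1/\alpha = (d-1)/2$, which identifies the left-hand side of the previous display with $\left\|f|\exp(L^{2/(d-1)})\right\|$ up to absolute constants. Chaining the two estimates yields exactly \eqref{cww}. This is essentially all there is to it; the proposition is a formal consequence of the two results already established in the excerpt, which is why the text calls it an ``easy deduction.''

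One small point worth addressing carefully is the compatibility of the ``$\inf$ over $K$'' definition of the Orlicz norm with the ``$\sup$ over $p$'' formula: Proposition \ref{exp_Lp} is stated as an equivalence $\simeq$, so I only need the direction that controls the Orlicz norm by the supremum of normalized $L_p$ norms, and no finiteness subtlety arises because $Sf \in L_\infty$ forces all the $L_p$ norms of $f$ to be finite with the stated growth. I would also note that the supremum over $p>1$ can be restricted to, say, $p \ge 2$ without loss, since for bounded $p$ the factor $p^{-(d-1)/2}$ is comparable to a constant and $\|f|L_p\| \le \|f|L_\infty\|$ trivially; but this is not strictly necessary.

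I do not anticipate a genuine obstacle here — the content is entirely in Lemma \ref{LPd}(ii) (the sharp $p^{(d-1)/2}$ dependence of the Littlewood--Paley constant for hyperbolic sums) and in Proposition \ref{exp_Lp}, both of which we may assume. The only thing to be slightly careful about is bookkeeping the exponent: it is the \emph{hyperbolic} inequality, with its $d-1$ (not $d$) applications of the one-dimensional estimate, that produces the exponent $2/(d-1)$ rather than the subgaussian $2/d$, and this must be stated precisely since it is exactly the improvement that the paper's main results hinge on.
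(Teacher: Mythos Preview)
Your argument is correct and is precisely the paper's own proof: apply the hyperbolic Littlewood--Paley inequality \eqref{e.LPh} to bound $\|f|L_p\| \preceq p^{(d-1)/2}\|Sf|L_\infty\|$, then invoke Proposition~\ref{exp_Lp} with $\alpha = 2/(d-1)$. The additional remarks about restricting to $p\ge 2$ and about finiteness are harmless but unnecessary.
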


\begin{proof}
According to \eqref{e.LPh}, we have $\| f|L_p([0,1)^d) \| \preceq p^{\frac{d-1}2} \| Sf |L_p([0,1)^d) \| \linebreak \le p^{\frac{d-1}2} \| Sf|L_\infty([0,1)^d) \|$. Estimate \eqref{cww} now follows from \eqref{e.exp_Lp}.
\end{proof}

We note that it is important here that the function $f$ is a linear combination of Haar functions supported by rectangles  of fixed volume: without this assumption the correct norm in the left-hand side would have been $\exp \big( L^{2/d} \big)$  which can be deduced from \eqref{e.LPd}.

For all $1\leq p <\infty$  we have $L_\infty \subset \exp (L^\alpha) \subset L_p$. Furthermore, it is obvious that  $\left\|f|\exp(L^\alpha)\right\| \preceq \left\|f|\exp(L^\beta)\right\|$, i.e. $\exp (L^\beta) \subset \exp (L^\alpha) $, for $\alpha <\beta$.  The next lemma shows that if we assume that $f \in L_\infty$, the relation may be reversed. The argument  is a simple interpolation between exponential Orlicz spaces and $L_\infty$.

\begin{prp}\label{interpolationexp}
 Let $0 <  \alpha < \beta <\infty$. Consider a function $f\in L_\infty([0,1)^d)$. If $f\in\exp(L^\alpha)$, then also  $f\in\exp(L^\beta)$ and we have
\[ \left\|f|\exp(L^\beta)\right\| \preceq \Big\|f|\exp(L^\alpha)\Big\|^{\alpha/\beta} \cdot \left\|f|L_\infty([0,1)^d)\right\|^{1-\alpha/\beta}. \]
\end{prp}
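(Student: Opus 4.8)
The plan is to use Proposition~\ref{exp_Lp} to reduce everything to an $L_p$ estimate with explicit dependence on $p$, and then to interpolate the $L_p$ norms between the hypothesis $f\in\exp(L^\alpha)$ and the assumption $f\in L_\infty$ via H\"older's inequality. Concretely, fix $p>1$. I would write $|f|^p = |f|^{\theta p}\cdot|f|^{(1-\theta)p}$ for a parameter $\theta\in(0,1)$ to be chosen, and apply H\"older with exponents $1/\theta$ and $1/(1-\theta)$ to get
\[
\|f|L_p\| \le \|f|L_{\theta p/\text{(something)}}\|\ \cdots
\]
but it is cleaner to simply bound one factor crudely: since $f\in L_\infty$, we have $\||f|^{(1-\theta)p}\|_{\infty} = \|f|L_\infty\|^{(1-\theta)p}$, so
\[
\|f|L_p\| \le \|f|L_\infty\|^{1-\theta}\cdot\big\||f|^{\theta}\big|L_{p}\big\|^{1} = \|f|L_\infty\|^{1-\theta}\cdot\|f|L_{\theta p}\|^{\theta}.
\]
Wait --- one must be careful: $\big\||f|^\theta|L_p\big\| = \|f|L_{\theta p}\|^\theta$ requires $\theta p\ge 1$, which holds for $p$ large. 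Now apply Proposition~\ref{exp_Lp} to the factor $\|f|L_{\theta p}\|$: it is $\preceq (\theta p)^{1/\alpha}\|f|\exp(L^\alpha)\| \le p^{1/\alpha}\|f|\exp(L^\alpha)\|$.

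Then I would multiply through by $p^{-1/\beta}$ and take the supremum over $p>1$, again invoking Proposition~\ref{exp_Lp}, this time in the $\beta$ direction, to recognize the left-hand side as $\|f|\exp(L^\beta)\|$. Collecting the powers of $p$: we get $p^{-1/\beta}\|f|L_p\| \preceq p^{-1/\beta}\cdot p^{\theta/\alpha}\|f|\exp(L^\alpha)\|^\theta\cdot\|f|L_\infty\|^{1-\theta}$. The supremum over $p$ is finite precisely when the exponent $\theta/\alpha - 1/\beta \le 0$, i.e. $\theta \le \alpha/\beta$; choosing $\theta = \alpha/\beta$ makes the power of $p$ vanish and yields exactly
\[
\|f|\exp(L^\beta)\| \preceq \|f|\exp(L^\alpha)\|^{\alpha/\beta}\cdot\|f|L_\infty\|^{1-\alpha/\beta},
\]
which is the claimed bound. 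The finitely many small values of $p$ (those with $\theta p < 1$, i.e. $p < \beta/\alpha$) are harmless: there $\|f|L_p\|\le\|f|L_\infty\|$ and $p^{-1/\beta}$ is bounded, so they contribute a term dominated by the right-hand side (using $\|f|L_\infty\|\preceq \|f|\exp(L^\alpha)\|^{\alpha/\beta}\|f|L_\infty\|^{1-\alpha/\beta}$ up to constants, which follows from $\exp(L^\alpha)\subset L_\infty$... actually $L_\infty\subset\exp(L^\alpha)$, so $\|f|L_\infty\|\ge c\|f|\exp(L^\alpha)\|$; in any case the small-$p$ range only needs an upper bound by $\|f|L_\infty\|$, and since the exponents $\alpha/\beta$ and $1-\alpha/\beta$ sum to $1$, the geometric mean of two comparable quantities is comparable to either, closing the gap).

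The main obstacle --- though it is a mild one --- is bookkeeping the ranges of $p$ and making sure the identity $\big\||f|^\theta|L_p\big\| = \|f|L_{\theta p}\|^\theta$ is used only when $\theta p\ge 1$, and handling the complementary compact range of $p$ separately as above. There is no deep analytic content: everything rests on H\"older's inequality and the two directions of Proposition~\ref{exp_Lp}, so the proof is essentially a two- or three-line computation once the choice $\theta=\alpha/\beta$ is identified. One could alternatively phrase it as genuine complex/real interpolation between $\exp(L^\alpha)$ and $L_\infty=\exp(L^\infty)$, but the direct $L_p$-comparison route via Proposition~\ref{exp_Lp} is the most transparent and self-contained given what has already been established.
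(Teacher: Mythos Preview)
Your approach is exactly the paper's: set $q=(\alpha/\beta)p$, use the interpolation $\|f|L_p\|\le\|f|L_q\|^{\alpha/\beta}\,\|f|L_\infty\|^{1-\alpha/\beta}$, and invoke Proposition~\ref{exp_Lp} on both sides; the paper does this in three lines without the surrounding discussion.

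One correction: your handling of the range $\theta p<1$ contains an error. You end up needing $\|f|L_\infty\|\preceq\|f|\exp(L^\alpha)\|^{\alpha/\beta}\|f|L_\infty\|^{1-\alpha/\beta}$, i.e.\ $\|f|L_\infty\|\preceq\|f|\exp(L^\alpha)\|$, and this is false --- the inclusion $L_\infty\subset\exp(L^\alpha)$ gives the \emph{reverse} inequality, and the two quantities are not comparable in general (think of $f=M\chi_{[0,\varepsilon]}$ with $\varepsilon$ small). The simplest fix is to observe that the interpolation inequality $\|f|L_p\|\le\|f|L_{\theta p}\|^{\theta}\|f|L_\infty\|^{1-\theta}$ holds for all $\theta p>0$ (the functional $\|\cdot|L_r\|$ is perfectly well defined for $0<r<1$), and that for $q\in(\alpha/\beta,1]$ on a probability space one has $\|f|L_q\|\le\|f|L_1\|\preceq\|f|\exp(L^\alpha)\|$ while $q^{1/\alpha}$ is bounded below; hence the bound $\|f|L_q\|\preceq q^{1/\alpha}\|f|\exp(L^\alpha)\|$ persists for all $q>\alpha/\beta$. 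The paper simply writes $\preceq\sup_{q>1}(\cdots)$ and absorbs this compact range into the implicit constant without comment.
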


\begin{proof} Set $q = \frac{\alpha}{\beta} p$. Then we have $\| f |L_p([0,1)^d) \| \le \|f |L_q([0,1)^d) \|^{\alpha/\beta}  \cdot  \| f |L_\infty([0,1)^d) \|^{1 - \alpha/\beta}$ and 
\begin{align*}
 \left\|f|\exp(L^\beta)\right\|  & \simeq \, \sup_{p>1} \, p^{-\frac{1}{\beta}} \cdot  \| f |L_p([0,1)^d) \| \le \sup_{p>1} \, p^{-\frac1{\beta}} \cdot  \| f |L_q([0,1)^d) \|^{\alpha/\beta} \cdot  \| f |L_\infty([0,1)^d) \|^{1 - \alpha/\beta}\\
 & \preceq \sup_{q>1} \left(  q^{-\frac1{\alpha}} \cdot  \| f |L_q([0,1)^d) \| \right)^{\alpha/\beta} \cdot  \| f |L_\infty([0,1)^d) \|^{1 - \alpha/\beta},
\end{align*} 
which finishes the proof. 
\end{proof}

\subsection{Digital nets}

Our next step is to define digital $(t,n,d)$-nets of order $\sigma\ge 1$. The original definition of digital nets  goes back to Niederreiter \cite{N87}, and the first constructions were given even earlier by Sobol' \cite{S67}. The concept of higher-order digital nets was introduced in \cite{D07}, \cite{D08}. We quote the definitions  from \cite{D07} and \cite[Definitions 4.1, 4.3]{D08}. In the case of order $\sigma = 1$,  the  original definition of digital nets is recovered.

For $n,\sigma\in\N$ let $C_1,\ldots,C_d$ be $\sigma n\times n$ matrices over $\F_2$. For $\nu\in\{0,1,\ldots,2^n-1\}$ with the binary expansion $\nu = \nu_0 + \nu_1 2 + \ldots + \nu_{n-1} 2^{n-1}$ with digits $\nu_0,\nu_1,\ldots,\nu_{n-1}\in\{0,1\}$, the binary digit vector $\bar{\nu}$ is given as $\bar{\nu} = (\nu_0,\nu_1,\ldots,\nu_{n-1})^{\top}\in\F_2^n$. Then we compute $C_i\bar{\nu}=(x_{i,\nu,1},x_{i,\nu,2},\ldots,x_{i,\nu,\sigma n})^{\top}\in\F_2^{\sigma n}$ for $1\leq i\leq d$. Finally we define
\[ x_{i,\nu}=x_{i,\nu,1} 2^{-1}+x_{i,\nu,2} 2^{-2}+\ldots+x_{i,\nu,\sigma n} 2^{-\sigma n} \in[0,1) \]
and $x_{\nu}=(x_{1,\nu},\ldots,x_{d,\nu})$. We call the point set $\P_{2^n}=\{x_0,x_1,\ldots,x_{2^n-1}\}$ a digital net (over $\F_2$).

Now let $0\leq t\leq \sigma n$ be an integer. For every $1\leq i\leq d$ we write $C_i=(c_{i,1},\ldots,c_{i,\sigma n})^{\top}$ where $c_{i,1},\ldots,c_{i,\sigma n}\in\F_2^n$ are the row vectors of $C_i$. If for all $1\leq \lambda_{i,1}<\ldots<\lambda_{i,\eta_i}\leq \sigma n,\,1\leq i\leq d$ with
\[ \lambda_{1,1}+\ldots+\lambda_{1,\min(\eta_1,\sigma)}+\ldots+\lambda_{d,1}+\ldots+\lambda_{d,\min(\eta_d,\sigma)}\leq\sigma n - t \]
the vectors $c_{1,\lambda_{1,1}},\ldots,c_{1,\lambda_{1,\eta_1}},\ldots,c_{d,\lambda_{d,1}},\ldots,c_{d,\lambda_{d,\eta_d}}$ are linearly independent over $\F_2$, then $\P_{2^n}$ is called an order $\sigma$ digital $(t,n,d)$-net (over $\F_2$).

The smaller the quality parameter $t$ and the greater the order $\sigma$, the better structure the point set has. In particular every point set $\P_{2^n}$ constructed with the digital method is at least an order $\sigma$ digital $(\sigma n,n,d)$-net. Every order $\sigma_2$ digital $(t,n,d)$-net is an order $\sigma_1$ digital $(\lceil t\sigma_1/\sigma_2 \rceil,n,d)$-net if $1\leq\sigma_1\leq\sigma_2$ (see \cite{D07}).  It is well known that digital $(t,n,d)$-nets  are perfectly distributed with respect to dyadic intervals (in the standard terminology, see e.g. \cite{DP10}, order $1$ digital $(t,n,d)$-nets are $(t,n,d)$-nets): {\emph{every dyadic interval of order $n-t$ contains exactly $2^t$ points of the $(t,n,d)$-net.}} A version of this property continues to hold for higher-order nets.

\begin{lem}\label{digital_net}
 Let $\P_{2^n}$ be an order $\sigma$ digital $(t,n,d)$-net, then every dyadic interval of order $n$ contains at most $2^{\lceil t/\sigma \rceil}$ points of $\P_{2^n}$.
\end{lem}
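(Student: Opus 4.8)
The plan is to reduce the statement about order $\sigma$ digital $(t,n,d)$-nets to the classical fact about ordinary digital $(t',n,d)$-nets, namely that a digital $(t',n,d)$-net places exactly $2^{t'}$ points in every dyadic interval of order $n-t'$. The bridge is the propagation rule already quoted in the excerpt: an order $\sigma_2$ digital $(t,n,d)$-net is an order $\sigma_1$ digital $(\lceil t\sigma_1/\sigma_2\rceil,n,d)$-net whenever $1\le\sigma_1\le\sigma_2$ (see \cite{D07}). Specializing to $\sigma_1=1$, $\sigma_2=\sigma$, the given order $\sigma$ digital $(t,n,d)$-net $\P_{2^n}$ is an order $1$ digital $(\lceil t/\sigma\rceil,n,d)$-net, i.e. an ordinary $(t',n,d)$-net with $t'=\lceil t/\sigma\rceil$.

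Next I would invoke the distribution property of ordinary digital $(t',n,d)$-nets stated just before the lemma: every dyadic interval of order $n-t'$ contains exactly $2^{t'}$ points of $\P_{2^n}$. Since a dyadic interval $I$ of order $n$ is contained in some dyadic interval $I'$ of order $n-t'$ (namely, refine the coordinates appropriately; any order-$n$ box sits inside an order-$(n-t')$ box obtained by coarsening $t'$ of the coordinate subdivisions), the number of points of $\P_{2^n}$ in $I$ is at most the number in $I'$, which is $2^{t'}=2^{\lceil t/\sigma\rceil}$. This gives the claimed bound.

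One slightly delicate point worth spelling out: the notion of "order $n$" here refers to $|j|=n$, so a dyadic box of order $n$ need not have all side-lengths equal; it is $I_{j,m}$ with $j\in\N_0^d$ and $j_1+\dots+j_d=n$. To contain such a box in a box of order $n-t'$ one decreases the exponents $j_i$ in some coordinates so that their sum drops by exactly $t'$ (this is possible as long as $t'\le n$, and if $t'>n$ the statement $2^{\lceil t/\sigma\rceil}\ge 2^n\ge\#\P_{2^n}$... wait, $\#\P_{2^n}=2^n$, so if $t'\ge n$ the bound $2^{t'}\ge 2^n$ is trivially satisfied by the whole point set). So the argument splits into the trivial case $\lceil t/\sigma\rceil\ge n$ and the main case $\lceil t/\sigma\rceil<n$ handled by containment. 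I do not expect any real obstacle here; the only thing to be careful about is formulating the containment of an order-$n$ dyadic box in an order-$(n-t')$ dyadic box correctly when the multi-index $j$ is not constant across coordinates, and invoking the propagation rule with the right rounding.
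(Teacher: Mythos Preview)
Your argument is correct, and it is exactly the intended derivation. Note, however, that the paper does not actually supply a proof of this lemma: it is stated as a fact immediately after the two ingredients you use (the propagation rule from \cite{D07} reducing order $\sigma$ to order $1$ with quality parameter $\lceil t/\sigma\rceil$, and the equidistribution property of ordinary $(t',n,d)$-nets). Your write-up simply makes explicit the two-line deduction the authors leave implicit.

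A couple of minor clean-ups. First, your ``wait'' aside can be removed: since by definition $0\le t\le\sigma n$, one always has $t'=\lceil t/\sigma\rceil\le n$, so the trivial case is just $t'=n$. Second, the containment step is unproblematic even when some $j_i=0$ or $j_i=-1$: identifying levels $-1$ and $0$ (both give the full interval $[0,1)$), any $j\in\N_{-1}^d$ with $|j|=n$ corresponds to a nonnegative multi-index summing to $n$, and one can decrease coordinates one unit at a time until the sum drops by $t'\le n$. So the argument goes through without further case distinctions.
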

It is a classical fact that such sets satisfy the best known star discrepancy estimate \eqref{Linfty}, see \cite[Theorem 5.10]{DP10}.
\begin{lem}\label{net_discr}
Let $\P_{2^n}$ be an order $\sigma$ digital $(t,n,d)$-net, then 
\begin{equation}\label{e.net_discr}
\left\|D_{\P_{2^n}}|L_\infty([0,1)^d)\right\| \preceq n^{d-1}.
\end{equation}
\end{lem}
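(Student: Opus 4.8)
The plan is to reduce the statement to the classical case of ordinary (order one) digital nets and then to run the standard elementary-interval argument for those.

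\emph{Reduction.} By the propagation property of digital nets recorded above (from \cite{D07}), applied with $\sigma_1=1$ and $\sigma_2=\sigma$, every order $\sigma$ digital $(t,n,d)$-net is in particular an ordinary $(t',n,d)$-net with $t'=\lceil t/\sigma\rceil$, a constant independent of $n$. Hence it will be enough to show $\left\|D_{\P_{2^n}}|L_\infty([0,1)^d)\right\|\preceq 2^{t}n^{d-1}$ for every ordinary $(t,n,d)$-net $\P_{2^n}$, with the implied constant allowed to depend on $t$ and $d$; taking $N=2^n$ this gives \eqref{e.net_discr}. Note that Lemma \ref{digital_net} alone is not enough here: it only controls point counts in boxes of a single order, whereas the argument needs the full $(t,n,d)$-net structure, namely that \emph{every} elementary interval of order $n-t$ carries exactly $2^t$ points.

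\emph{The elementary-interval estimate.} Fix $x=(x_1,\dots,x_d)\in[0,1)^d$, and assume $n$ is large in terms of $t,d$ (the remaining finitely many $n$ are absorbed into the constant). Using the binary expansion of each $x_i$, I would decompose the anchored box as $[0,x)=\bigsqcup_{k\in\N^d}Q_k$, where $Q_k=\prod_{i=1}^d E_{i,k_i}$ and $E_{i,k_i}$ is the $k_i$-th layer of $[0,x_i)$ — either empty, or a single dyadic interval of order $k_i$ — so that a nonempty $Q_k$ is an elementary dyadic box of order $|k|=k_1+\dots+k_d$ and $D_{\P_{2^n}}(x)=\sum_{k}\big(\#(\P_{2^n}\cap Q_k)-2^n|Q_k|\big)$. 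I then split the sum at $|k|=n-t$. For $|k|\le n-t$ the box $Q_k$ has order $\le n-t$, so (refining it to order $n-t$ and using the net property) it contains exactly $2^n|Q_k|$ points, and these terms cancel. For $|k|>n-t$ the fair counts contribute at most $2^n\sum_{\ell>n-t}\#\{k\in\N^d:|k|=\ell\}\,2^{-\ell}\preceq 2^n\,(n-t)^{d-1}\,2^{-(n-t)}\preceq 2^{t}n^{d-1}$, using $\#\{k\in\N^d:|k|=\ell\}\preceq\ell^{d-1}$ and summing the near-geometric series; and the actual counts are controlled by enclosing each remaining $Q_k$ (of order $>n-t\ge d$) in an elementary interval of order exactly $n-t$ obtained by coarsening. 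Only $\preceq(n-t)^{d-1}$ distinct enclosing boxes arise — essentially $O(1)$ per shape $j$ with $|j|=n-t$, since in each coordinate the coarsening falls into one of two dyadic cells of that level determined by $x_i$ — and each carries exactly $2^t$ net points, so $\#\big(\P_{2^n}\cap\bigcup_{|k|>n-t}Q_k\big)\preceq 2^{t}(n-t)^{d-1}$. Adding the two contributions and taking the supremum over $x$ gives $\left\|D_{\P_{2^n}}|L_\infty([0,1)^d)\right\|\preceq 2^{t}n^{d-1}$, as required.

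\emph{Where the work is.} Since this is classical (\cite[Theorem 5.10]{DP10}), I do not expect a difficulty of principle. The one place that needs care is producing the exponent $d-1$ rather than $d$ in the high-order (``boundary'') part: a naive layer-by-layer count is off by a factor of $n$, and the correct power emerges only from the identity $\#\{j\in\N_0^d:|j|=\ell\}\simeq\ell^{d-1}$ together with the geometric weight $2^{-\ell}$, which make both the fair-count series and the number of enclosing boxes be dominated by the single scale $\ell\approx n-t$, of size $\simeq 2^{t}(n-t)^{d-1}$.
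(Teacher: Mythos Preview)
The paper does not supply its own proof of this lemma; it simply cites it as a classical fact (\cite[Theorem~5.10]{DP10}) immediately after the statement. Your argument is precisely the standard elementary-interval proof underlying that reference, and it is correct: the reduction from order~$\sigma$ to order~$1$ via the propagation rule is exactly what is needed, and the split at $|k|=n-t$ with cancellation on the coarse part and the $2^t(n-t)^{d-1}$ bound on the boundary part goes through as you describe.

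One small confirmation on the point you flagged as delicate: for a fixed shape $j$ with $|j|=n-t$, the level-$j_i$ dyadic cell enclosing $E_{i,k_i}$ is indeed one of at most two cells determined by $x_i$ --- it is $E_{i,j_i}$ itself when $k_i=j_i$, and the level-$j_i$ cell containing $x_i$ when $k_i>j_i$ (since every $E_{i,k_i}$ with $k_i>j_i$ lies in the latter). So your $O(1)$-per-shape bound is exactly $2^d$, and the total count of enclosing boxes is $\preceq 2^d(n-t)^{d-1}$ as claimed.
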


Constructions of order $\sigma$ digital $(t_2,n,d)$-nets can be obtained via so-called digit interlacing of order $1$ digital $(t_1,n,\sigma d)$-nets and several constructions of order $1$ digital nets are known. For details, examples and further literature we refer to \cite{DP14b} and \cite{D14}. We only point out here that there are constructions with a good quality parameter $t$, which in particular does not depend on $n$.

\section{Proofs of the theorems}
We will prove the main theorems in the case when the number of points is a power of two, i.e. $N=2^n$. The reduction to the general case is standard, see e.g. \S6.3 in \cite{BLPV09}. Our examples are  the higher-order digital nets described in the previous section with the minimal non-trivial value of the order $\sigma=2$.

We shall rely on the recent estimates of the Haar coefficients of the discrepancy function of   order $2$ digital nets obtained by the second author. The following result is \cite[Lemma 5.9]{M14}.

\begin{lem}\label{lem_ord2}
 Let $\P_{2^n}$ be an order $2$ digital $(t,n,d)$-net. Let $j\in\N_{-1}^d$ and $m\in \D_j$.
\begin{enumerate}[(i)]
 \item If $|j|\geq n-\lceil t/2\rceil$, then $|\langle D_{\P_{2^n}},h_{j,m}\rangle|\preceq 2^{-|j|}$ and $|\langle D_{\P_{2^n}},h_{j,m}\rangle|\preceq 2^{-2|j| +n}$ for all but $2^n$ values of $m$.\label{lem_ord2_part1}
 \item If $|j|< n-\lceil t/2\rceil$, then $|\langle D_{\P_{2^n}},h_{j,m}\rangle|\preceq 2^{-n}\left(2n-t-2|j|\right)^{d-1}$.\label{lem_ord2_part2}
\end{enumerate}                                                                                                 
\end{lem}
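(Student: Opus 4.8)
The plan is to compute the Haar coefficient directly, by splitting $D_{\P_{2^n}}=C_{\P_{2^n}}-L$ with $L(x)=2^n x_1\cdots x_d$. Since $\chi_{[0,x)}(z)=\prod_{i=1}^d\chi_{(z_i,1]}(x_i)$ as a function of $x$, the counting part factorizes coordinatewise, and, using the elementary identity $\int_0^1\phi_{j_i,m_i}(z)\,\dint z=\langle t,h_{j_i,m_i}\rangle$ for the linear term, one arrives at the clean formula
\begin{equation*}
\langle D_{\P_{2^n}},h_{j,m}\rangle=\sum_{z\in\P_{2^n}}\Phi_{j,m}(z)\;-\;2^n\!\int_{[0,1)^d}\!\Phi_{j,m}(z)\,\dint z,\qquad \Phi_{j,m}(z):=\prod_{i=1}^d\phi_{j_i,m_i}(z_i),
\end{equation*}
where $\phi_{j_i,m_i}(z):=\int_0^1\chi_{(z,1]}(t)\,h_{j_i,m_i}(t)\,\dint t$. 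A short computation gives $\phi_{-1,0}(z)=1-z$, while for $j_i\ge 0$ the function $\phi_{j_i,m_i}$ is supported on $I_{j_i,m_i}$, vanishes at its endpoints, and equals $-2^{-j_i}T(2^{j_i}z-m_i)$ there, $T(u):=\min(u,1-u)$ being the unit tent; in particular $\|\phi_{j_i,m_i}\|_\infty=2^{-j_i-1}$ and $|\langle t,h_{j_i,m_i}\rangle|=2^{-2j_i-2}$. So $\langle D_{\P_{2^n}},h_{j,m}\rangle$ is exactly $2^n$ times the quasi--Monte Carlo error of the tensor-product tent/ramp function $\Phi_{j,m}$ for the net $\P_{2^n}$; note that $\Phi_{j,m}$ vanishes off the dyadic box $B_{j,m}$ of order $|j|$ (full in the coordinates with $j_i=-1$), that $\|\Phi_{j,m}\|_\infty\le 2^{-|j|}$, and that $2^n\big|\int\Phi_{j,m}\big|\simeq 2^{n-2|j|}$.

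For Part~\ref{lem_ord2_part1} ($|j|\ge n-\lceil t/2\rceil$) I would argue by crude estimation. Put $t':=\lceil t/2\rceil$, a constant independent of $n$. The linear term is $\preceq 2^{n-2|j|}=2^{-|j|}2^{n-|j|}\le 2^{t'}2^{-|j|}\preceq 2^{-|j|}$. For the sum, an order $2$ digital $(t,n,d)$-net is an order $1$ digital $(t',n,d)$-net, so together with Lemma~\ref{digital_net} every dyadic box of order $\ge n-t'$ — in particular $B_{j,m}$ — contains at most $2^{t'}\preceq 1$ points of $\P_{2^n}$, each contributing at most $\|\Phi_{j,m}\|_\infty\le 2^{-|j|}$; hence $\big|\sum_z\Phi_{j,m}(z)\big|\preceq 2^{-|j|}$ and the first bound follows. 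Moreover every point of $\P_{2^n}$ lies in $B_{j,m}$ for exactly one $m\in\D_j$, so for all but at most $2^n$ values of $m$ the sum is empty and only the linear term remains, which is $\preceq 2^{n-2|j|}$; this is the second assertion.

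Part~\ref{lem_ord2_part2} ($|j|<n-\lceil t/2\rceil$) is the crux, and the only place where the order $2$ hypothesis is used. Here $B_{j,m}$ carries its full share $2^{n-|j|}$ of points, the sum and the integral are both of size $\simeq 2^{n-2|j|}$, and one must exhibit a cancellation all the way down to $\simeq 2^{-n}(\log N)^{d-1}$; this is where I would bring in Walsh analysis. Expanding $\Phi_{j,m}$ in its multivariate Walsh series and invoking the digital-net character-sum identity $\sum_{z\in\P_{2^n}}\wal_{\mathbf k}(z)=2^n\cdot\1[\mathbf k\in\mathcal{D}^{\perp}]$ (with $\mathcal{D}^{\perp}$ the dual net of $\P_{2^n}$) collapses the formula above to $\langle D_{\P_{2^n}},h_{j,m}\rangle=2^n\sum_{\mathbf 0\ne\mathbf k\in\mathcal{D}^{\perp}}\widehat\Phi_{j,m}(\mathbf k)$. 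Two ingredients then drive the estimate. First, a change of variables yields, for $j_i\ge 0$, $\widehat\phi_{j_i,m_i}(k_i)=\pm 2^{-2j_i}\widehat T\big(\lfloor k_i/2^{j_i}\rfloor\big)$ (and the Walsh coefficients of the ramp $1-z$ when $j_i=-1$), so $\widehat\Phi_{j,m}(\mathbf k)$ factorizes, carries a global factor $2^{-2|j|}$, and inherits the decay $|\widehat T(q)|\preceq 2^{-\mu_2(q)}$ of the tent's Walsh coefficients, $\mu_2$ denoting the second-order Dick weight. Second, the defining property of an order $2$ digital $(t,n,d)$-net is equivalent to the statement that every nonzero $\mathbf k\in\mathcal{D}^{\perp}$ has $\mu_2(\mathbf k)\ge 2n-t+1$, and (again by this weight bound) any interval of length $2^{j_i}$ meets $\mathcal{D}^{\perp}$ in at most one point; reorganizing the sum over $\mathcal{D}^{\perp}$ according to the reduced frequencies $q_i=\lfloor k_i/2^{j_i}\rfloor$, whose total weight then exceeds $2n-t-2|j|$ since the factors $2^{-2j_i}$ already absorb $2|j|$ units, and summing the decaying coefficients while counting the admissible multi-indices — here the combinatorial fact $\#\{j'\in\N_0^d:|j'|=L\}\simeq L^{d-1}$ supplies the power $(2n-t-2|j|)^{d-1}$ — bounds $\sum_{\mathbf 0\ne\mathbf k\in\mathcal{D}^{\perp}}\widehat\Phi_{j,m}(\mathbf k)$ by $\simeq 2^{-2n}(2n-t-2|j|)^{d-1}$, and multiplying by $2^n$ completes the proof.

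The hard part will be the endgame of Part~\ref{lem_ord2_part2}: the triangle inequality applied naively to $\sum_{\mathbf 0\ne\mathbf k\in\mathcal{D}^{\perp}}\widehat\Phi_{j,m}(\mathbf k)$ is too lossy, so one must combine the fine structure of the Walsh coefficients of $\Phi_{j,m}$ (including their vanishing pattern) with the precise weight distribution of the dual net of an order $2$ net — this is exactly where Dick's theory of higher-order digital nets is indispensable and where the exponent $\sigma=2$ is genuinely needed, the order $1$ analogue being false in general. In the write-up this step is taken directly from \cite[Lemma~5.9]{M14}. It is also worth noting that the power $d-1$ appearing here is the ultimate source of the exponent $2/(d-1)$ in the exponential-Orlicz estimates of the main theorems.
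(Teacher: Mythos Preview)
The paper does not prove this lemma at all: it is quoted verbatim as \cite[Lemma~5.9]{M14} and used as a black box, which you yourself note in your closing paragraph. So there is no in-paper proof to compare against.

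That said, your sketch is a faithful outline of the argument behind \cite{M14}. Part~\eqref{lem_ord2_part1} is essentially complete as written: the identity for $\langle D_{\P_{2^n}},h_{j,m}\rangle$ in terms of the tensor-tent function $\Phi_{j,m}$ is correct, the bound on the linear piece is immediate, and the counting step uses exactly the right facts (an order~$2$ $(t,n,d)$-net is an order~$1$ $(\lceil t/2\rceil,n,d)$-net, so $B_{j,m}$ of order $\ge n-\lceil t/2\rceil$ holds $\preceq 1$ points; the boxes $B_{j,m}$, $m\in\D_j$, partition $[0,1)^d$, so at most $2^n$ of them are occupied). For Part~\eqref{lem_ord2_part2} you have correctly located the engine --- Walsh expansion, the dual-net character identity, tensor factorization of $\widehat\Phi_{j,m}$ with the scaling $\widehat\phi_{j_i,m_i}(k_i)=\pm 2^{-2j_i}\widehat T(\lfloor k_i/2^{j_i}\rfloor)$, and the order-$2$ weight condition $\mu_2(\mathbf k)>2n-t$ on $\mathcal D^\perp\setminus\{0\}$ --- and correctly flagged the endgame as the delicate part. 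One phrase is garbled: ``any interval of length $2^{j_i}$ meets $\mathcal D^\perp$ in at most one point'' does not parse (the dual net is a lattice of integer $d$-tuples, not a subset of the line); what you presumably intend is a sparsity statement for the reduced frequencies $q_i=\lfloor k_i/2^{j_i}\rfloor$ arising from $\mathbf k\in\mathcal D^\perp$, and the actual bookkeeping there is more involved than your one-sentence summary suggests. But since both you and the paper ultimately defer this computation to \cite{M14}, your proposal matches the paper exactly in substance.
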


In fact, we shall mostly need the second part of this  lemma, i.e. the Haar coefficients  for small values of $|j|$ (in other words, for large intervals). If we compare this estimate to the corresponding two-dimensional bound for the Van der Corput set obtained in \cite[Lemma 4.1]{BLPV09}, which stated that $|\langle D_{\P_{2^n}},h_{j,m}\rangle|\preceq 2^{-n}$, we see that in our case we have an additional logarithmic factor. It is however completely harmless as one can see from the following elementary computation.

\begin{lem}\label{geom_ser}
 Let $K$ be a positive integer, $A>1$, and $q,r > 0$. Then we have
\[ \sum_{k=0}^{K-1} A^k\,(K-k)^q\,k^r \preceq A^K\,K^r, \] where the implicit constant is independent of $K$.
\end{lem}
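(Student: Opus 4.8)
The plan is to exploit the fact that in the sum $\sum_{k=0}^{K-1} A^k (K-k)^q k^r$ with $A>1$, the geometric weight $A^k$ concentrates the mass near the top index $k \approx K$. First I would split the sum at the midpoint $k = \lfloor K/2 \rfloor$. On the lower half $0 \le k < K/2$, every term is crudely bounded by $A^{K/2} K^{q+r}$ (there are at most $K$ of them), giving a contribution $\preceq A^{K/2} K^{q+r+1}$; since $A^{K/2} K^{q+r+1} = A^K \cdot (A^{-K/2} K^{q+r+1}) \preceq A^K$ because $A^{-K/2}$ decays faster than any power of $K$ grows, this piece is even smaller than the claimed bound (in particular $\preceq A^K K^r$). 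On the upper half $K/2 \le k \le K-1$, I would write $k^r \le K^r$, pull this factor out, and reduce to estimating $K^r \sum_{K/2 \le k < K} A^k (K-k)^q$.

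For the remaining sum I would substitute $\ell = K - k$, so that $\sum_{K/2 \le k < K} A^k (K-k)^q = A^K \sum_{1 \le \ell \le K/2} A^{-\ell} \ell^q \le A^K \sum_{\ell=1}^{\infty} A^{-\ell} \ell^q$. The tail series $\sum_{\ell \ge 1} \ell^q A^{-\ell}$ converges to a finite constant depending only on $A$ and $q$ (it is, up to normalization, a polylogarithm evaluated at $1/A < 1$; convergence is immediate from the ratio test since $A>1$). Hence this half contributes $\preceq A^K K^r$, which is exactly the target bound. Combining the two halves yields the claim.

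I do not anticipate a genuine obstacle here — the argument is entirely elementary. The only point requiring a little care is the bookkeeping of which power of $K$ appears where: one must notice that the $(K-k)^q$ factor, far from hurting, is summable after the change of variables because it is paired with the decaying geometric factor $A^{-\ell}$, so it contributes no power of $K$ at all; only the $k^r \le K^r$ estimate on the upper half produces the surviving $K^r$. One should also make explicit that the implicit constant is allowed to depend on $A$, $q$, and $r$ but not on $K$, which is consistent with the statement and with the way the lemma will be applied (where $A$, $q$, $r$ are fixed in terms of the dimension).
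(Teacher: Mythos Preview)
Your argument is correct and follows the same core idea as the paper's proof: bound $k^r \le K^r$, substitute $\ell = K-k$, and use the convergence of $\sum_{\ell \ge 1} \ell^q A^{-\ell}$. The paper's version is slightly shorter because it applies $k^r \le K^r$ on the entire range $0 \le k \le K-1$ at once, so your preliminary split at $K/2$ is unnecessary.
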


\begin{proof}
We have
 \begin{align*}
  \sum_{k=0}^{K-1} A^k\,(K-k)^q\,k^r &\leq A^K\,K^r \sum_{k=0}^{K-1} A^{k-K}\,(K-k)^q 
                                     = A^K\,K^r \sum_{k=1}^{K} A^{-k}\,k^q \preceq A^K\,K^r.
 \end{align*}
\end{proof}

We now turn to the proofs of the main theorems, which are similar in spirit to the arguments in \cite{BLPV09}.

\subsection{Proof of Theorem \ref{main_result}}

Let $\P_{2^n}$ be an order $2$ digital $(t,n,d)$-net with the quality parameter $t $ depending only on the dimension $d$. 
We recall that  $\# \{ j \in \mathbb N_0^d: |j| = n \} \simeq n^{d-1}$ and $\#\D_j=2^{|j|}$. We fix an arbitrary measurable set $U \subset [0,1)^d$. We need to prove
\[ |U|^{-1} \sum_{j \in \N_0^d} 2^{|j|} \sum_{\substack{m\in \D_j \\ I_{j,m} \subset U}} |\langle D_{\P_{2^n}},h_{j,m}\rangle|^2 \preceq n^{d-1}. \]
We split the sum above  into three cases: large, intermediate, and small intervals, according to the cases in Lemma \ref{lem_ord2}. We observe that, in each case,  there are at most $2^{|j|}|U|$ values of $m \in \D_j$ such that $I_{j,m} \subset U$. 

Starting with large intervals, we apply \eqref{lem_ord2_part2} of Lemma \ref{lem_ord2} and Lemma \ref{geom_ser} to obtain
\begin{align*}
 & |U|^{-1} \sum_{\substack{j\in\N_0^d \\ |j|< n-\lceil t/2\rceil}} 2^{|j|} \sum_{\substack{m\in \D_j \\ I_{j,m} \subset U}} |\langle D_{\P_{2^n}},h_{j,m}\rangle|^2 \\
 &\preceq |U|^{-1} \sum_{\substack{j\in\N_0^d \\ |j|< n-\lceil t/2\rceil}} 2^{|j|}\,2^{|j|}|U|\,2^{-2n}\left(2n-t-2|j|\right)^{2(d-1)} \\
 &\preceq 2^{-2n} \sum_{k=0}^{n-\lceil t/2\rceil -1} 2^{2k}\left(2n-t-2k\right)^{2(d-1)}(k+1)^{d-1} \\
 &\preceq 2^{-2n}\,2^{2n}\, (n-t/2)^{d-1} \preceq n^{d-1}.
\end{align*}

Next, we consider  intermediate intervals and apply \eqref{lem_ord2_part1} of Lemma \ref{lem_ord2} to obtain
\begin{align*}
 &|U|^{-1} \sum_{\substack{j\in\N_0^d \\ n-\lceil t/2\rceil \leq |j| < n}}2^{|j|}\sum_{\substack{m\in \D_j \\ I_{j,m} \subset U}} |\langle D_{\P_{2^n}},h_{j,m}\rangle|^2 \\
 &\preceq |U|^{-1} \sum_{\substack{j\in\N_0^d \\ n-\lceil t/2\rceil \leq |j| < n}}2^{|j|}\,2^{|j|}|U|\,2^{-2|j|}\\
 &\leq \sum_{k=n-\lceil t/2\rceil}^{n-1}(k+1)^{d-1} \preceq n^{d-1}.
\end{align*}

We now turn to the case of small intervals, where $|j| \ge n$.  These boxes are too small to capture any  cancellation, hence we will treat the linear and counting parts of the discrepancy function separately. The case of the linear part $L_{\mathcal P_{2^n}}(x) = 2^n  x_1 \cdot \ldots \cdot x_d$ is simple. It is easy to verify that $|\langle L_{\mathcal P_{2^n}},h_{j,m}\rangle| \simeq 2^{-2|j| +n}$, thus we obtain
\begin{align*}
 & |U|^{-1} \sum_{\substack{j\in\N_0^d \\ |j|\geq n}} 2^{|j|} \sum_{\substack{m\in \D_j \\ I_{j,m} \subset U}} |\langle L_{\mathcal P_{2^n}} ,h_{j,m}\rangle|^2 \\ 
 &\preceq |U|^{-1} \sum_{\substack{j\in\N_0^d \\ |j|\geq n}} 2^{|j|}\,2^{|j|}|U|\,2^{-4|j| +2n} \\
 &\preceq  2^{2n} \sum_{k=n}^\infty 2^{-2k}\,(k+1)^{d-1} \preceq n^{d-1}.
\end{align*}

Estimating the counting part $C_{\P_{2^n}}$ is a bit more involved. Let $\mathcal J$ denote the family of all dyadic  intervals  $I_{j,m } \subset U$ with $|j| \ge n $ and  such that $\langle C_{\mathcal P_{2^n} } , h_{j,m}  \rangle \neq 0$. Consider the subfamily $\widetilde{\mathcal J} \subset \mathcal J$, which consists of {\emph{maximal}} (with respect to inclusion) dyadic intervals in $\mathcal J$. We first demonstrate the following fact, which provides control of the total size of the intervals in this family
\begin{equation}\label{e.maxu}
\sum_{I_{j,m} \in \widetilde{\mathcal J}}  | I_{j,m} |  = \sum_{I_{j,m} \in \widetilde{\mathcal J}} 2^{-|j|} \preceq  n^{d-1} |U|.
\end{equation}

Indeed, consider an interval  $I_{j,m} \in \mathcal J$. Since $\langle C_{\mathcal P_{2^n} } , h_{I_{j,m} } \rangle \neq 0$, this implies that at least one point $z \in \mathcal P_{2^n}$ must be contained in the interior of $I_{j,m}$, which in turn means that each side of $I_{j,m}$ has length at least $2^{-2n}$ (since $\mathcal P_{2^n}$ is an order $2$ digital net, whose points  have binary coordinates of length $2n$), i.e. $0\le j_k \le 2n$ for $k=1,\ldots,d$.

Fix integer parameters $r_1$,\ldots,$r_{d-1}$ between $0$ and $2n$. Consider the family $\widetilde{\mathcal J}_{r_1,\ldots,r_{d-1}} \subset \widetilde{\mathcal J}$ consisting of those intervals $I_{j,m} \in \widetilde{\mathcal J}$ for which $j_k = r_k$ for $k=1,\ldots, d-1$, i.e. the lengths of their first $d-1$ sides are fixed. Then all intervals in this family are disjoint: if two of them intersected, then their first $d-1$ sides would have to coincide, and hence one would have to be contained in the other, which would contradict maximality. Therefore, we find that 
\begin{align*}
\sum_{I_{j,m} \in \widetilde{\mathcal J}} 2^{-|j|} &=  \sum_{r_1,\ldots,r_{d-1} = 0}^{2n} \sum_{I_{j,m} \in \widetilde{\mathcal J}_{r_1,\ldots,r_{d-1}}  }  \big| I_{j,m} \big|  
 \le  \sum_{r_1,\ldots,r_{d-1} = 0}^{2n}  |U|  \preceq  n^{d-1} |U|,
\end{align*}
which proves \eqref{e.maxu}.

For a dyadic interval $J$, we define
\[ C^J_{\mathcal P_{2^n}} (x) = \sum_{z\in \mathcal P_{2^n} \cap J} \chi_{[0,x)} (z), \]
i.e. the part of the counting function, which counts only the  points from $J$. It is clear that $\langle C_{\mathcal P_{2^n} }, h_{j,m} \rangle = \langle C^J_{\mathcal P_{2^n} }, h_{j,m} \rangle$ whenever $I_{j,m} \subset J$.

We recall Lemma \ref{digital_net} which implies that any dyadic interval of volume at most $2^{-n}$ contains no more than $2^{\lceil t/2 \rceil}$ points. Therefore, for any interval $J \in \widetilde{\mathcal J}$ we have 
\begin{equation}\label{e.countL2}
\big\| C^J_{\mathcal P_{2^n}} \big\|_{L_2 ( J) } \le \sum_{p\in \mathcal P_{2^n} \cap J}  \big\| \chi_{[0,\cdot)}(z) \big\|_{L_2(J)} \leq 2^{\lceil t/2 \rceil} |J|^{\frac12}.
\end{equation}
Using  the orthogonality of Haar functions, Bessel inequality, \eqref{e.countL2}, and \eqref{e.maxu}, we find that
\begin{align*}
  |U|^{-1} \sum_{\substack{I_{j,m} \in \mathcal J }}  2^{|j|} |\langle C_{\mathcal P_{2^n} }, h_{j,m} \rangle|^2 & \le |U|^{-1} \sum_{J \in \widetilde{\mathcal J}}  \sum_{I_{j,m} \subset J }  2^{|j|} |\langle C^J_{\mathcal P_{2^n} }, h_{j,m} \rangle|^2 \\ & \le |U|^{-1} \sum_{J \in \widetilde{\mathcal J}} \big\| C^J_{\mathcal P_{2^n}} \big\|_{L_2 ( J) }^2  \leq  |U|^{-1} \, 2^{t+1} \sum_{J \in \widetilde{\mathcal J}} |J|   \preceq  n^{d-1},
\end{align*}
which concludes the proof for small intervals and therefore proves Theorem \ref{main_result}.


\subsection{Proof of Theorem \ref{bmo_lower}}

We now turn to the  proof of the matching lower bound for the space $\bmo^d$. The proof is a simple adaptation of the ideas of the original proof \cite{R54} of the lower bound for the $L_2$-discrepancy \eqref{UpperRoth}. Fix an arbitrary point set $\mathcal P_N \subset [0,1)^d$ with $N$ points. Choose the scale $n \in \mathbb N$ so that $ 2N \le 2^n < 4N$. This choice guarantees that for  each $j \in \mathbb N_0^d$ with $| j | = n$, there are at least $2^{n-1}$ values of $m \in \D_j$ such that $I_{j,m} \cap \mathcal P_N = \emptyset$, i.e. at least half of all intervals do not contain any  points of $\mathcal P_N$. As discussed before, for such empty intervals $|\langle D_{\mathcal P_N},h_{j,m}\rangle| = |\langle L_N,h_{j,m}\rangle| \simeq N 2^{-2|j|}  \simeq 2^{-n}$. We use the definition of the $\bmo^d$ norm  \eqref{def_bmo} and choose the measurable set $U= [0,1)^d$ to obtain
\begin{align*}
\left\| D_{\mathcal P_N} |\bmo^d\right\|^2 & \ge  \sum_{j \in \N_0^d} 2^{|j|} \sum_{\substack{m\in \D_j }} |\langle D_{\mathcal P_N} ,h_{j,m}\rangle|^2  \ge   \sum_{\substack{j \in \N_0^d \\ | j | = n}} 2^{|j|} \sum_{\substack{m\in \D_j \\ I_{j,m} \cap \mathcal P_N = \emptyset }} |\langle L_N ,h_{j,m}\rangle|^2 \\
& \succeq \sum_{\substack{j \in \N_0^d \\ | j | = n}} 2^n \cdot 2^{n-1} \cdot 2^{-2n} \simeq  n^{d-1},
\end{align*}
which finishes the proof, since $n \simeq \log N$. 

\subsection{Proof of Theorem \ref{main_result_exp_upper}}

 
We now turn our attention to the proof of the upper bound in the Orlicz space $\exp \big( L^{2/(d-1)} \big)$.  
Once again we consider three different cases, namely large, intermediate, and small intervals. 

We start with the large intervals. Applying the triangle inequality,  Chang--Wilson--Wolff inequality (Proposition \ref{changwilsonwolff}),  and part \eqref{lem_ord2_part2} of Lemma \ref{lem_ord2}, 
we obtain
\begin{align*}
 &\Bigg\|\sum_{\substack{j\in\N_{-1}^d \\ |j|< n-\lceil t/2\rceil}} 2^{|j|} \sum_{m\in \D_j} \langle D_{\P_{2^n}},h_{j,m}\rangle \, h_{j,m}\, \big|\exp \big(L^{2/(d-1) } \big) \Bigg\|  \\
 & \le \sum_{k=0}^{n-\lceil t/2\rceil} \Bigg\|\sum_{\substack{j\in\N_{-1}^d \\ |j|=k}} 2^{|j|} \sum_{m\in \D_j} \langle D_{\P_{2^n}},h_{j,m}\rangle \, h_{j,m} \, \big|\exp \big(L^{2/(d-1)} \big)  \Bigg\|  \\
 &\preceq \sum_{k=0}^{n-\lceil t/2\rceil} \Bigg\|\Bigg(\sum_{\substack{j\in\N_{-1}^d \\ |j|=k}} 2^{2|j|} \sum_{m\in \D_j} |\langle D_{\P_{2^n}},h_{j,m}\rangle|^2 \, \chi_{I_{j,m}}\Bigg)^{1/2}\, \big| \, L_\infty \Bigg\| \\
 &\preceq \sum_{k=0}^{n-\lceil t/2\rceil} \Bigg\|\Bigg(\sum_{\substack{j\in\N_{-1}^d \\ |j|=k}} 2^{2k}\,2^{-2n}\left(2n-t-2|j|\right)^{2(d-1)} \sum_{m\in \D_j} \chi_{I_{j,m}}\Bigg)^{1/2} \, \big| \, {L_\infty} \Bigg\| \\
 &\preceq 2^{-n} \sum_{k=0}^{n-\lceil t/2 \rceil }  \left( 2^{2k}\left(2n-t-2k\right)^{2(d-1)}(k+1)^{d-1}\right)^{1/2} \\
 &\preceq 2^{-n}\,2^{n}n^{(d-1)/2}  = n^{(d-1)/2}.
\end{align*}
Now we consider the medium sized intervals applying \eqref{lem_ord2_part1} of Lemma \ref{lem_ord2} and obtaining
\begin{align*}
 &\Bigg\|\sum_{\substack{j\in\N_{-1}^d \\ n-\lceil t/2\rceil \leq |j| < n}}2^{|j|}\sum_{m\in \D_j} \langle D_{\P_{2^n}},h_{j,m}\rangle\, h_{j,m}\,  \big|\exp \big(L^{2/(d-1)} \big)  \Bigg\|  \\
 & \le \sum_{k=n-\lceil t/2\rceil}^{n-1}  \Bigg\|\sum_{\substack{j\in\N_{-1}^d:\, |j | = k }}2^{|j|}\sum_{m\in \D_j} \langle D_{\P_{2^n}},h_{j,m}\rangle\, h_{j,m} \, \big|\exp \big(L^{2/(d-1)} \big)  \Bigg\|  \\
 &\leq   \sum_{k=n-\lceil t/2\rceil}^{n-1} \Bigg\| \Bigg(\sum_{\substack{j\in\N_{-1}^d:\, | j |= k }}2^{2k}\sum_{m\in \D_j} |\langle D_{\P_{2^n}},h_{j,m}\rangle|^2\, \chi_{I_{j,m}}\Bigg)^{1/2} \, \big| \, L_\infty \Bigg\|  \\
 &\preceq   \sum_{k=n-\lceil t/2\rceil}^{n-1}  \left((k+1)^{d-1}\right)^{1/2} \preceq n^{(d-1)/2}.
\end{align*}

In the case of small intervals we  again treat the linear and the counting parts separately. Since $|\langle L_{\mathcal P_{2^n}},h_{j,m}\rangle| \preceq 2^{-2|j| +n}$ we obtain
\begin{align*}
 & \Bigg\|\sum_{\substack{j\in\N_{-1}^d \\ |j|\geq n}} 2^{|j|} \sum_{m\in \D_j} \langle L_{\mathcal P_{2^n}},h_{j,m}\rangle\, h_{j,m}\, \big|\exp \big(L^{2/(d-1)} \big) \Bigg\|  \\
& \le  \sum_{k=n}^\infty \Bigg\|\sum_{\substack{j\in\N_{-1}^d:\, |j|=k}} 2^{|j|} \sum_{m\in \D_j} \langle L_{\mathcal P_{2^n}},h_{j,m}\rangle\, h_{j,m} \, \big|\exp \big(L^{2/(d-1)} \big)  \Bigg\|  \\
 &\leq \sum_{k=n}^\infty  \Bigg\| \Bigg(\sum_{\substack{j\in\N_{-1}^d :\, |j|=k }} 2^{2k} \sum_{m\in \D_j} |\langle L_{\mathcal P_{2^n}},h_{j,m}\rangle|^2\, \chi_{I_{j,m}}   \Bigg)^{1/2} \, \big| \, L_\infty   \Bigg\|  \\
 &\preceq 2^{n}  \sum_{k=n}^\infty \left( \,  2^{-2k}\,(k+1)^{d-1}\right)^{1/2} \preceq n^{(d-1)/2}.
\end{align*}

The estimate of the  counting part is somewhat harder.  Recall that  $\mathcal J$ denotes the family of all dyadic  intervals  $I_{j,m } \subset U$ with $|j| \ge n $, i.e. $| I_{j,m} | \le 2^{-n}$,  such that $\langle C_{\mathcal P_{2^n} } , h_{j,m}  \rangle \neq 0$.  As noticed earlier, if $I_{j,m} \in \mathcal J$, this implies that $I_{j,m}$ contains at least one point of $\mathcal P_{2^n}$ in its interior and therefore $j_k \le 2n$ (i.e. $ | I_{j_k,m_k} | \ge 2^{-2n}$) for each $k=1,\ldots,d$.  

In addition, for each $I_{j,m} \in \mathcal J$, we can find its  unique {\emph{parent}} $\widetilde{I}_{j',m'}$ which satisfies the following conditions: (i) $I_{j,m} \subset \widetilde{I}_{j',m'}$;  (ii) $|j'| = 2^{-n}$, i.e. $| \widetilde{I}_{j',m'} | = 2^{-n}$; and (iii) $j_k = j'_k$ (which implies that $I_{j_k,m_k}=  \widetilde{I}_{j'_k,m'_k}$) for all $k=1,\ldots,d-1$. In other words, to find the parent, we expand the $d$-th side of $I_{j,m}$ so that the resulting interval has volume $2^{-n}$. We can now reorganize the sum according to the parents  
\begin{align}\label{e.sum1} 
 &  \sum_{\substack{j\in\N_{-1}^d \\ |j|\geq n}} 2^{|j|} \sum_{m\in \D_j} \langle C_{\mathcal P_{2^n}},h_{j,m}\rangle\, h_{j,m}   =      \sum_{\substack{   \widetilde{I}_{j',m'}:\, |j'|=n \\ j'_k \le 2n:\, k=1,\ldots,d}}    
\,\, \sum_{\substack{I_{j,m} \subset  \widetilde{I}_{j',m'} \\ j_k = j'_k:\, k=1,\ldots, d-1}}  2^{|j|}  \langle C_{\mathcal P_{2^n}},h_{j,m}\rangle\, h_{j,m}  .
\end{align}
Fix an arbitrary   parent interval $ \widetilde{I}_{j',m'}$ and consider the innermost sum above
\begin{align}\label{e.sum2}
\sum_{\substack{I_{j,m} \subset  \widetilde{I}_{j',m'} \\ j_k = j'_k:\, k\le d-1}}  2^{|j|}  \langle C_{\mathcal P_{2^n}},h_{j,m}\rangle\, h_{j,m}  = \sum_{p  \in \mathcal P_{2^n} \cap  \widetilde{I}_{j',m'}} 
\sum_{\substack{I_{j,m} \subset  \widetilde{I}_{j',m'} \\ j_k = j'_k:\, k\le d-1}}  2^{|j|}  \langle {{\chi}}_{[p,1)},h_{j,m}\rangle\, h_{j,m}. 
\end{align}

\noindent We notice that the expression inside the last sum splits into a product of one-dimensional factors:
\begin{align*}
2^{|j|}  \langle {{\chi}}_{[p,1)},h_{j,m}\rangle\, & h_{j,m}  (x)  = \prod_{j=1}^d 2^{j_k}   \langle {{\chi}}_{[p_k,1)},h_{j_k,m_k}\rangle\, h_{j_k,m_k } (x_k)\\
& = \Bigg( \prod_{j=1}^{d-1} 2^{j'_k}   \langle {{\chi}}_{[p_k,1)},h_{j'_k,m'_k}\rangle\, h_{j'_k,m'_k } (x_k) \Bigg)\cdot  2^{j_d}   \langle {{\chi}}_{[p_d,1)},h_{j_d,m_d}\rangle\, h_{j_d,m_d } (x_d)\\
& =  2^{| j _*'|}   \langle {{\chi}}_{[p_*,1)},h_{j_*',m_*'}\rangle\, h_{j_*',m_*' } (x_1,\ldots,x_{d-1}) \cdot 2^{j_d}   \langle {{\chi}}_{[p_d,1)},h_{j_d,m_d}\rangle\, h_{j_d,m_d } (x_d),
\end{align*}
where by $*$ we have denoted the projection of the $d$-dimensional vector to its first $d-1$ coordinates, e.g. if $j=(j_1,\ldots,j_d)$, then $j_* = (j_1,\ldots,j_{d-1})$. The expression in \eqref{e.sum2} can now be rewritten as
\begin{align*}
& \sum_{\substack{I_{j,m} \subset  \widetilde{I}_{j',m'} \\ j_k = j'_k:\, k\le d-1}}  2^{|j|}   \langle {{\chi}}_{[p,1)},h_{j,m}\rangle\, h_{j,m} (x) \\ & =  2^{| j _*'|}   \langle {{\chi}}_{[p_*,1)},h_{j_*',m_*'}\rangle\, h_{j_*',m_*' } (x_*) \cdot \sum_{\substack{I_{j_d,m_d} \subset  \widetilde{I}_{j'_d,m'_d} }}  2^{j_d}  \langle {{\chi}}_{[p_d,1)},h_{j_d,m_d}\rangle\, h_{j_d,m_d} (x_d),
\end{align*}
leaving us with the task to examine this ultimate one-dimensional sum. However, one can easily see that this sum is precisely  the Haar expansion of the function ${{\chi}}_{[p_d,1)}$ restricted to the interval $\widetilde{I}_{j'_d,m'_d}$, except for the constant term, i.e. 
\begin{equation}\label{e.sum3} \sum_{\substack{I_{j_d,m_d} \subset  \widetilde{I}_{j'_d,m'_d} }}  2^{j_d}  \langle {{\chi}}_{[p_d,1)},h_{j_d,m_d}  \rangle\, h_{j_d,m_d} (x_d) = \chi_{\widetilde{I}_{j'_d,m'_d}} (x_d) \cdot \Big(  \chi_{[p_d,1)} (x_d)  -  2^{j_d} \Big| [p_d,1) \cap \widetilde{I}_{j'_d,m'_d}\Big|   \Big),
\end{equation}
 which, in particular, is bounded pointwise by $2$.  Obviously, $\Big|  2^{| j _*'|}   \langle {{\chi}}_{[p_*,1)},h_{j_*',m_*'}\rangle \Big| \le 1$. We recall that, according to Lemma \ref{digital_net},  there are at most $2^{\lceil t/2 \rceil}$ points $p \in \mathcal P_{2^n} \cap  \widetilde{I}_{j',m'}$. Therefore,
  \begin{align*}
\,\, \sum_{\substack{I_{j,m} \subset  \widetilde{I}_{j',m'} \\ j_k = j'_k:\, k=1,\ldots, d-1}}  2^{|j|}  \langle C_{\mathcal P_{2^n}},h_{j,m}\rangle\, h_{j,m} (x) = 
\alpha_{j'_d} (x_d) h_{j_*',m_*' } (x_*), 
 \end{align*}
where  $|\alpha_{j'_d} (x_d) | \le 2^{{\lceil t/2 \rceil}+1} \preceq 1$. 

Let $x_d$ be fixed for the moment. Due to \eqref{e.sum3}, for a given $(d-1)$-dimensional dyadic interval $\widetilde{I}_{j'_*,m'_*}$, there exists at most one  $d$-dimensional dyadic interval $\widetilde{I}_{j',m'} = \widetilde{I}_{j'_*,m'_*} \times \widetilde{I}_{j'_d,m'_d}$ with $ | \widetilde{I}_{j',m'} | = 2^{-n}$ and such that $\alpha_{j'_d} (x_d) \neq 0$. Therefore,  applying \eqref{e.sum1} and taking $L_p$-norms in the first $d-1$ variables we obtain:
\begin{align*}\label{e.sum4} 
 & \Bigg\|\sum_{\substack{j\in\N_{-1}^d \\ |j|\geq n}} 2^{|j|} \sum_{m\in \D_j} \langle C_{\mathcal P_{2^n}},h_{j,m}\rangle\, h_{j,m} \, \big| \, {L_p (dx_*)} \Bigg\|  \\ 
 & =   \Bigg\|  \sum_{\substack{   \widetilde{I}_{j',m'}:\, |j'|=n  \\ j'_k \le 2n:\, k=1,\ldots,d}}    
\,\, \sum_{\substack{I_{j,m} \subset  \widetilde{I}_{j',m'} \\ j_k = j'_k:\, k=1,\ldots, d-1}}  2^{|j|}  \langle C_{\mathcal P_{2^n}},h_{j,m}\rangle\, h_{j,m}  \, \big| \, {L_p (dx_*)}  \Bigg\|  \\
 & =      \Bigg\|  \sum_{\substack{   \widetilde{I}_{j'_*,m'_*}:\, |j'|=n \\ j'_k \le 2n,\, k=1,\ldots, d-1}}  \alpha_{j'_d} (x_d) h_{j_*',m_*' }   \, \big| \, {L_p (dx_*)}  \Bigg\|  \\
 & \preceq p^{\frac{d-1}{2}}  \Bigg\|  \Bigg( \sum_{\substack{   \widetilde{I}_{j'_*,m'_*}:\, |j'|=n \\ j'_k \le 2n,\, k=1,\ldots, d-1}}  |\alpha_{j'_d} (x_d)|^2 \chi_{\widetilde{I}_{j'_*,m'_*}}  \Bigg)^{1/2}  \, \big| \, {L_p (dx_*)}  \Bigg\|  \preceq p^{\frac{d-1}{2}} n^{\frac{d-1}{2}},
  \end{align*}
  where in the last line we have employed the $(d-1)$-dimensional Littlewood--Paley inequality (\ref{e.LPd}) and the fact that there are of the order of $n^{d-1}$ choices of $j'_*$ in the sum. Integrating this bound with respect to $x_d$ and applying Proposition \ref{exp_Lp} we arrive at 
\begin{equation*}
\Bigg\|\sum_{\substack{j\in\N_{-1}^d \\ |j|\geq n}} 2^{|j|} \sum_{m\in \D_j} \langle C_{\mathcal P_{2^n}},h_{j,m}\rangle\, h_{j,m} \, \big| {\exp \big(L^{2/(d-1)} \big)} \Bigg\| \preceq n^{\frac{d-1}{2}},
\end{equation*}
which finishes the proof of Theorem \ref{main_result_exp_upper}.


\subsection{Proof of Corollary \ref{cor_exp_upper}}

We set $\alpha = \frac2{d-1}$ and use Proposition \ref{interpolationexp} to interpolate between the $\exp \big(L^{2/(d-1)} \big)$ estimate \eqref{exp_upper} of Theorem \ref{main_result_exp_upper} and the $L_\infty$ estimate \eqref{e.net_discr} of Lemma \ref{net_discr}:
\begin{align*}
 \left\|D_{\P_{2^n}}|\exp(L^\beta)\right\| &\preceq \left\|D_{\P_{2^n}}|\exp(L^{2/(d-1)})\right\|^{\frac{2}{(d-1)\beta}} \cdot \left\|D_{\P_{2^n}}|L_\infty([0,1)^d)\right\|^{1-\frac{2}{(d-1)\beta}} \\
 &\preceq n^{\frac{d-1}{2} \cdot \frac{2}{(d-1)\beta} } \, n^{(d-1)\cdot \left(1-   \frac{2}{(d-1)\beta} \right)}  = n^{(d-1)-\frac1{\beta}}.
\end{align*}

\subsection{Orlicz space estimates and star-discrepancy}\label{remarks}
 
 In the end we would like to outline an argument which demonstrates how  estimates in exponential Orlicz spaces may be related to the ``great open problem'' of the subject \cite{BC87}, i.e. sharp bounds on the $L_\infty$-discrepancy. Let us assume that for a certain order $2$ digital net $\mathcal P_{2^n}$ with $N=2^n$ points, for some $\alpha>0$ the discrepancy function satisfies an exponential bound
 \begin{equation}\label{ifexp}
 \left\| D_{\mathcal P_{2^n}} | \, \exp (L^\alpha) \right\| \preceq (\log N)^{\frac{d-1}{2}} \simeq n^{\frac{d-1}{2}}.
 \end{equation}
 This trivially leads to the following distributional estimate: for each $\lambda >0$
 \[ \mu \left\{ x\in [0,1]^d : \, \big| D_{\mathcal P_{2^n}} (x) \big| > \lambda \right\} \preceq \exp \left( - \bigg(\frac{\lambda}{n^{(d-1)/2}}\bigg)^\alpha  \right),
\] where $\mu $ is the Lebesgue measure. The fact that $\mathcal P_{2^n}$ is a binary digital net (i.e. all points have binary coordinates of length $2n$) implies that its discrepancy function does not change much on dyadic intervals of side length $2^{-2n}$. Therefore, for those  values of $\lambda$, for which the set $\big\{ \big| D_{\mathcal P_{2^n}} (x) \big| > \lambda \big\}$ is non-empty, we must have 
\[  \mu \left\{ x\in [0,1]^d : \, \big| D_{\mathcal P_{2^n}} (x) \big| > \lambda \right\} \succeq 2^{-2nd} .\]
Comparing the last two estimates, we observe that they cannot simultaneously hold, if $\lambda \succeq n^{\frac{d-1}2 + \frac{1}{\alpha}}$, i.e. in this case the set $\big\{ \big| D_{\mathcal P_{2^n}} (x) \big| > \lambda \big\} = \emptyset$, in other words
\begin{equation}\label{exptostar}
 \big\| D_{\mathcal P_{2^n}}   \big\|_{L_\infty } \preceq n^{\frac{d-1}2 + \frac{1}{\alpha}}. 
 \end{equation}
Recall that two main conjectures about the correct asymptotics of the discrepancy function predict  the sharp order of growth of either $(\log N)^{d-1}$ or $(\log N)^{d/2}$. Our Theorem \ref{main_result_exp_upper} is consistent with the first hypothesis: in this case  \eqref{ifexp} holds with $\alpha = \frac{2}{d-1}$ and hence \eqref{exptostar} becomes $\big\| D_{\mathcal P_{2^n}}   \big\|_{L_\infty } \preceq n^{d-1}$ which matches the best known upper bound \eqref{Linfty}.

If one strives to prove the second conjecture along these lines, estimate \eqref{ifexp} should hold with $\alpha =2$, i.e. one would need to construct a digital net whose discrepancy function is subgaussian.

We notice that Skriganov \cite[Lemma 6.2]{S14}  uses a somewhat different discretization approach which yields similar results and shows that an estimate in Orlicz space  $\operatorname{exp} \big(L^{2/(d-1)} \big)$ yields the $L_\infty$ upper bound for the discrepancy function of the order $(\log N)^{d-1}$.

\subsection{Acknowledgements} Both authors would like to express gratitude to the organizers of the workshop ``Discrepancy, Numerical Integration and Hyperbolic Cross Approximation'' (HCM, Bonn, Germany, 2013), the international conference  MCQMC 2014 (KU Leuven, Belgium), the Hausdorff trimester program ``Harmonic Analysis and PDE'' (HIM, Bonn, Germany, 2014), the semester program ``High-dimensional approximation'' (ICERM, Brown University, Providence, RI, USA, 2014), where they had an opportunity to meet and discuss their work. The  first author was supported by the NSF grant DMS-1260516.

\addcontentsline{toc}{chapter}{References}

\end{document}